\documentclass[]{article}
\usepackage{array}
\usepackage{graphicx}
\usepackage{graphics}
\usepackage{color,soul}
\usepackage{amssymb,amsmath,amsthm,epsfig}
\usepackage{latexsym}
\setlength{\textwidth}{6.0in} \setlength{\textheight}{8.5in}
\setlength{\headheight}{.2cm}\setlength{\topmargin}{-0.2cm}
\setlength{\evensidemargin}{0.15 in}
\setlength{\oddsidemargin}{0.15 in}
\newcommand{\J}{P^{(\alpha,\alpha)}}
\newcommand{\wJ}{\widetilde P^{(\alpha,\alpha)}}
\usepackage{color}
\newtheorem{theorem}{Theorem}
\newtheorem{lemma}{Lemma}

\newtheorem{proposition}{Proposition}

\newtheorem{remark}{Remark}
\setcounter{page}{1}

\newcommand{\N}{\mathbb N}
\newcommand{\Z}{\mathbb Z}
\newcommand\ds{\displaystyle}
\newcounter{rea}
\setcounter{rea}{0}

\newcounter{rek}
\setcounter{rek}{0}

\newcommand{\ps}{\psi_{n,c}^{(\alpha)}}

\newcommand{\norm}[1]{{\left\|{#1}\right\|}}
\begin{document}
\begin{center}
{\large {\bf  Further Spectral Properties of the Weighted Finite Fourier Transform Operator and Approximation in Weighted Sobolev Spaces.}}\\
\vskip 1cm NourElHouda Bourguiba$^a$ and Ahmed Souabni$^a$ {\footnote{
Corresponding author: Ahmed Souabni, email: souabniahmed@yahoo.fr\\
This work was supported in part by the  Tunisian DGRST  research grants  UR 13ES47.}}
\end{center}

\vskip 0.25cm {\small
\noindent
$^a$ University of Carthage, Department of Mathematics, Faculty of Sciences of Bizerte, Tunisia.}\\

\noindent{\bf Abstract}---
In this work, we first give some mathematical preliminaries concerning the generalized prolate spheroidal wave function (GPSWFs). This set of special functions have been introduced in \cite{Wang2} and \cite{Karoui-Souabni1} and they are defined as the infinite and countable set of the  eigenfunctions of a weighted finite Fourier transform operator. Then, we show that the set of the singular values of this operator has a super-exponential decay rate. We also give some   local estimates and bounds  of these  GPSWFs. As an application of the spectral properties
of the GPSWFs and their associated eigenvalues, we give their quality of approximation in a weighted Sobolev space.Finally, we provide the reader with some numerical examples that illustrate the different results of this work.\\

\section{Introduction}

We first recall that for $c>0$, the classical prolate spheroidal wave functions (PSWFs) were first discovered and studied by D. Slepian and his co-authors, see \cite{Slepian1, Slepian2}. These PSWFs are defined as the solutions of the following energy maximization problem :
\begin{equation*}
\mbox{ Find } g=\arg\max_{f\in B_c}\frac{\int_{-1}^1|f(t)|^2 dt}{\int_{\mathbb{R}}|f(t)|^2 dt}
\end{equation*}
Where $B_c$ is the classical Paley-Winer space, defined by
\begin{equation}
\label{Bc}
B_c=\{ f\in L^2(\mathbb R),\,\, \mbox{Support } \widehat f\subseteq [-c,c]\}.
\end{equation}
Here, $\widehat f$ is the Fourier transform of $f\in L^2(\mathbb R),$ defined by ${\displaystyle \widehat f(\xi)=\lim_{A\rightarrow +\infty}\int_{[-A,A]} e^{-i x\xi} f(x)\, dx.}$
Moreover, it has been shown in \cite{Slepian1},  that the PSWFs are also the eigenfunctions of the integral operator ${\displaystyle \mathcal Q_c}$ defined on $L^2(-1,1)$ by $${\displaystyle \mathcal Q_c f(x)=\int_{-1}^1 \frac{\sin(c(x-y))}{\pi(x-y)} f(y)\, dy} $$ as well as the eigenfunctions of a  commuting differential operator $ \mathcal{L}_c$ with $ \mathcal{Q}_c$ and given by
$$ \mathcal L_c f(x)= (1-x^2)f''(x)-2x f'(x) - c^2 x^2 f(x) $$
In this paper, we are interested in a weighted family of PSWFs called the generalized prolate spheroidal wave functions (GPSWFs), recently given in \cite{Karoui-Souabni1} and\cite{Wang2} . They are defined as the eigenfunctions of the  Gegenbauer perturbed differential operator,
\begin{equation*}
\mathcal L^{(\alpha)}_c \varphi(x)= (1-x^2)\varphi''(x)-2\left(\alpha+1\right)x\varphi'(x) - c^2 x^2 \varphi(x)=
\mathcal L_0 \varphi(x)-c^2 x^2 \varphi(x), \mbox{ where } \alpha>-1; c>0,
\end{equation*}
as well as  the eigenfunctions of the following commuting  weighted finite Fourier transform integral operator,
\begin{equation} \label{integraloperator}
{\displaystyle \mathcal F_c^{(\alpha)} f(x)=\int_{-1}^1 e^{icxy}  f(y)\,(1-y^2)^{\alpha}\, dy,\, \alpha > -1.}
\end{equation}
The question of quality of approximation by PSWFs has attracted some interests. In fact the authors in \cite{chen} have given an estimate of the decay of the coefficients of the PSWFs series expansion of $f \in H^s(I)$. Here $ H^s(I)$ is the Sobolev space over $I=[-1,1]$ and of exponent $s>0$.  Later in \cite{Wang2} the authors studied the convergence of the expansion of functions $f \in H^s(I)$ in a basis of PSWFs. We should mention that the problem of the best choice of the value of the bandwidth $c>0$ arises here. Numerical answers were given in \cite{Wang2}. Recently in \cite{Bonami-Karoui4}, the authors have given a precise answer to the previous problem. In this work, we further study the convergence rate of the projection ${\displaystyle S_N \cdot f = \displaystyle \sum_{k=0}^N <f,\ps> \ps }$ to $f$ . Here $ \ps$ are the GPSWFs and $ f \in H^s_{\alpha}(I),$ a weighted Sobolev space defined by:
$$  H^r_{\alpha
}(I) = \{f \in L^2_{I,\omega} : \norm{f}_{\alpha, r}<\infty \} \quad
\mbox{where} \quad \norm{f}^2_{\alpha,r} = \displaystyle{\sum_{k=0}^r \norm{f^{(k)}}_{\omega}^2} $$
equipped with the norm and semi-norm
$$\norm{f}_{\alpha,r} = \displaystyle{\sum_{k=0}^r \norm{f^{(k)}}_{\omega}^2} \quad \mbox{and} \quad |f|_{\alpha,r} = \|f^{(k)}\|_{\omega_{\alpha}} $$
Here $$ \omega_\alpha(t) = (1-t^2)^{\alpha} \mbox{ and } \norm{.}_{\omega} = \norm{.}_{L^2(I,\omega_\alpha)} $$
In this work , we will give the following result

\begin{theorem}
	Let $c>1$ and $\alpha  >0$, then there exist constants $K>0$ and $a>0$ such that, when
	$N > m_{\alpha}c$ with $m_{\alpha}=4.13\left(1.28+\frac{2\alpha+1}{1.9}\right)^{0.55}$    and $f \in H^s_{\alpha}$; $s > 0$, we have the inequality
	\begin{eqnarray} \label{ineq}
	% \nonumber to remove numbering (before each equation)
	\|f-S_N(f)\|_{L^2(I, \omega_{\alpha})} &\leq& \left(1+ \left(\frac{N}{2}\right)^2\right)^{\frac{-s}{2}} \|f\|_{H^s_{\alpha}} +K.e^{-a N}\|f\|_{L^2(I,\omega_{\alpha})}\label{8}
	\end{eqnarray}
	Where $S_N(f)(t)=\ds{\sum_{n<N}}<f,\psi_{n,c}^{\alpha}>_{L^2_{\alpha}(I)}\psi_{n,c}^{\alpha}$
\end{theorem}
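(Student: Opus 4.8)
The plan is to compare the GPSWF projection $S_N$ with the truncation of the Gegenbauer expansion and to exploit the concentration of the high-index GPSWFs, in the spirit of \cite{Bonami-Karoui4}. I work in the two orthonormal bases of $L^2(I,\omega_\alpha)$: the normalized Gegenbauer polynomials $\widetilde Q_k^{(\alpha,\alpha)}$, which are the eigenfunctions of $\mathcal L_0$ with eigenvalue $-\chi_k^0$, $\chi_k^0=k(k+2\alpha+1)$, and the GPSWFs $\ps$. Since $\mathcal L_0$ is self-adjoint in $L^2(I,\omega_\alpha)$ with these eigenfunctions, the relevant input from the definition of $H^s_\alpha$ is the spectral control $\sum_{k\ge 0}(1+\chi_k^0)^s|\widehat f_k|^2\le\|f\|_{H^s_\alpha}^2$, where $\widehat f_k=\langle f,\widetilde Q_k^{(\alpha,\alpha)}\rangle_\omega$; this is what converts the smoothness hypothesis $f\in H^s_\alpha$ into decay of the Gegenbauer coefficients.

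First I would fix a cut-off $M=M(N)$, the least integer with $\chi_M^0\ge (N/2)^2$; since $\chi^0_{N/2}=(N/2)^2+(N/2)(2\alpha+1)>(N/2)^2$ for $\alpha>0$, this forces $M\le N/2$, hence $n-M\ge N/2$ for every $n\ge N$. Decompose $f=f_{\mathrm{low}}+f_{\mathrm{high}}$ with $f_{\mathrm{low}}=\sum_{k<M}\widehat f_k\,\widetilde Q_k^{(\alpha,\alpha)}$ and $f_{\mathrm{high}}=\sum_{k\ge M}\widehat f_k\,\widetilde Q_k^{(\alpha,\alpha)}$. Because $S_N$ is the orthogonal projection onto $\mathrm{span}\{\ps:\,n<N\}$, the operator $I-S_N$ is an $L^2(I,\omega_\alpha)$-contraction, so $\|f-S_N f\|_\omega\le\|(I-S_N)f_{\mathrm{high}}\|_\omega+\|(I-S_N)f_{\mathrm{low}}\|_\omega$. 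For the smooth part, contractivity together with the choice of $M$ gives
$$\|(I-S_N)f_{\mathrm{high}}\|_\omega^2\le\|f_{\mathrm{high}}\|_\omega^2=\sum_{k\ge M}|\widehat f_k|^2\le(1+\chi_M^0)^{-s}\sum_{k\ge M}(1+\chi_k^0)^s|\widehat f_k|^2\le\big(1+(N/2)^2\big)^{-s}\|f\|_{H^s_\alpha}^2,$$
which is exactly the first term of \eqref{ineq}, with constant one in front of $\|f\|_{H^s_\alpha}$.

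The crux is the low part, and here I would invoke the local estimates on the GPSWFs established earlier. For $N>m_\alpha c$ the eigenvalue $\chi_n$ of $\mathcal L^{(\alpha)}_c$ is dominated by $\mathcal L_0$ rather than by the bounded perturbation $-c^2x^2$, so $\ps$ is concentrated on Gegenbauer modes of degree of order $\sqrt{\chi_n}\approx n$; consequently its low-order coefficients $\beta_k^n=\langle\ps,\widetilde Q_k^{(\alpha,\alpha)}\rangle_\omega$ obey an exponential bound $|\beta_k^n|\le C\,e^{-a'n}$ for all $k<M\le n/2$, with $C,a'>0$ depending only on $\alpha$. Writing $(I-S_N)f_{\mathrm{low}}=\sum_{n\ge N}\langle f_{\mathrm{low}},\ps\rangle_\omega\,\ps$ and $\langle f_{\mathrm{low}},\ps\rangle_\omega=\sum_{k<M}\widehat f_k\,\beta_k^n$, Cauchy--Schwarz gives $|\langle f_{\mathrm{low}},\ps\rangle_\omega|^2\le\|f\|_\omega^2\sum_{k<M}|\beta_k^n|^2\le\|f\|_\omega^2\,\tfrac n2\,C^2e^{-2a'n}$, whence
$$\|(I-S_N)f_{\mathrm{low}}\|_\omega^2=\sum_{n\ge N}|\langle f_{\mathrm{low}},\ps\rangle_\omega|^2\le C^2\,\|f\|_\omega^2\sum_{n\ge N}\tfrac n2\,e^{-2a'n}\le K^2e^{-2aN}\|f\|_{L^2(I,\omega_\alpha)}^2$$
for any fixed $a<a'$, after absorbing the polynomial factor into $K$. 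Taking square roots and adding the two contributions yields \eqref{ineq}.

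The main obstacle is this last exponential estimate: proving that the low-degree Gegenbauer coefficients of the high-index GPSWFs are exponentially small, uniformly once $N>m_\alpha c$. This is precisely where the quantitative analysis of the recurrence satisfied by the $\beta_k^n$ (equivalently, the super-exponential decay of the singular values) and the explicit threshold $m_\alpha=4.13\big(1.28+\tfrac{2\alpha+1}{1.9}\big)^{0.55}$ enter, and it carries all the technical weight. The remaining steps are routine once the Sobolev characterization is in hand; the only points needing care are that $I-S_N$ is a genuine contraction in $L^2(I,\omega_\alpha)$, so the constant in front of $\|f\|_{H^s_\alpha}$ stays equal to one, and that the single cut-off $M\approx N/2$ simultaneously secures the factor $\big(1+(N/2)^2\big)^{-s/2}$ and leaves a spectral gap of order $N$ to drive the exponential bound.
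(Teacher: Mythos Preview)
Your proposal is correct and follows essentially the same approach as the paper: split $f$ via a Gegenbauer truncation at a level comparable to $N/2$, control the high-frequency part directly by the Sobolev norm (using the spectral characterization of $H^s_\alpha$ from \cite{Nicaise}), and control the low-frequency part via the exponential bound $|\beta_k^n|\le C e^{-\delta n}$ for $k\le n/1.9$ and $n\ge m_\alpha c$, which is exactly the paper's Lemma~\ref{decaybeta}. Your version is in fact slightly more careful than the paper's own proof---you specify the cutoff $M$ precisely and use the contractivity of $I-S_N$ to secure the constant~$1$ in front of $\|f\|_{H^s_\alpha}$---but the argument is structurally identical.
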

Note that the subject of spectral approximation in weighred Sobolev spaces has been studied  by Canuto and Quarteroni \cite{Canuto-Quarteroni} in the Legendre and Chebyshev case, Bernardi and Maday \cite{Bernardi-Maday}have developed Geganbauer approximations in the same context. Later in \cite{Guo-Wang} Guo and Wang developed the approximation result in the Jacobi case.
Our study of the convergence rate of $ \norm{S_N-Id}$ is done by using an estimate for the  decay of the eigenvalues $\lambda^{(\alpha)}_n(c)$ of the self-adjoint  operator ${\displaystyle \mathcal Q_c^{(\alpha)} =\frac{c}{2\pi}  \mathcal F_c^{(\alpha)^*} \mathcal F_c^{(\alpha)},}$ that is
$ \lambda_n^{(\alpha)}(c) = \frac{c}{2\pi}{|\mu_n^{(\alpha)}(c)|^2}$, where   $\mu_n^{(\alpha)}$ is the n-th eigenvalues of the integral operator given by \eqref{integraloperator}.  That's why the first part of this work will be devoted to the study of the spectral decay of the eigenvalues of the  operator ${\displaystyle \mathcal Q_c^{(\alpha)}}$. \\
Nonetheless, the decay rate of the eigenvalues of the integral operator has been studied in \cite{Karoui-Souabni2} where a first asymptotically decay rate of $ (\lambda_n^{(\alpha)})_n $ for $ 0<\alpha<3/2$ has been given . In this work we further improve this last result by giving clearly simpler and more general proof. We will prove that
\begin{theorem}
	For given real numbers $c > 0$,$ \;\; \;\alpha\geq 0$ and for any integer $n > \frac{ec+1}{2},$ we have
	\begin{equation}\label{Eq3.1}
	| \mu_n^{(\alpha)}(c)| \leq  \frac{k_{\alpha}}{c^{\frac{\alpha}{2}+1}\log\left(\frac{2n-1}{ec}\right)}
	\left(\frac{ec}{2n-1}\right)^{n+\frac{\alpha}{2}},\;\;\; k_{\alpha}=\left(\frac{2}{e}\right)^{1+\frac{\alpha}{2}}\pi(\Gamma(\alpha+1))^{1/2}.
	\end{equation}
	and
	\begin{equation}\label{newdecay}
	\lambda_n^{(\alpha)}(c) \leq \frac{K_\alpha}{c^{\alpha+1}\log^2(\frac{2n-1}{ec})}\Big(\frac{ec}{2n-1}\Big)^{2n+\alpha} \quad \mbox{with} \quad K_{\alpha} = \frac{\pi}{2}\big(2/e \big)^{\alpha+2} \Gamma(\alpha+1)\nonumber
	\end{equation}
\end{theorem}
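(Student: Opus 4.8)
My first move is to observe that the second inequality \eqref{newdecay} is a free consequence of the first. Since $\lambda_n^{(\alpha)}(c)=\frac{c}{2\pi}|\mu_n^{(\alpha)}(c)|^2$, I would simply square \eqref{Eq3.1} and multiply by $c/2\pi$: the exponent $n+\frac{\alpha}{2}$ becomes $2n+\alpha$, the factor $\log^{-1}((2n-1)/ec)$ becomes $\log^{-2}((2n-1)/ec)$, the power $c^{-\alpha/2-1}$ becomes $c^{-\alpha-2}$, and after multiplying by $c/2\pi$ the surviving power is $c^{-\alpha-1}$. The constant then collapses exactly, since $\frac{k_\alpha^2}{2\pi}=\frac{\pi}{2}(2/e)^{\alpha+2}\Gamma(\alpha+1)=K_\alpha$. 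Thus the whole theorem reduces to the single eigenvalue bound \eqref{Eq3.1}, and the rest of the work concentrates on $\mu_n^{(\alpha)}(c)$, the $n$-th eigenvalue of $\mathcal F_c^{(\alpha)}$.

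The engine for \eqref{Eq3.1} is the classical Gegenbauer--Bessel integral. Expanding the eigenfunction $\ps$ in the orthonormal basis $\wJ_k$ of $L^2(I,\omega_\alpha)$, I would use that $\mathcal F_c^{(\alpha)}\wJ_k(x)$ equals an explicit constant (a ratio of Pochhammer symbols, a Gamma factor, and the normalizing constant of $\wJ_k$) times $i^k J_{k+\alpha+1/2}(cx)/(cx)^{\alpha+1/2}$. This turns the action of the transform on each basis element into a Bessel function whose order grows with $k$. To isolate $\mu_n^{(\alpha)}(c)$ cleanly I would prefer the approximation-number characterization $|\mu_n^{(\alpha)}(c)|=\min_{\mathrm{rank}\,T\le n}\norm{\mathcal F_c^{(\alpha)}-T}$, taken with $T=\mathcal F_c^{(\alpha)}\Pi_n$ where $\Pi_n$ is the $\omega_\alpha$-orthogonal projection onto polynomials of degree $<n$; then $\mathcal F_c^{(\alpha)}(I-\Pi_n)$ only sees Bessel functions of order at least $n+\alpha+1/2$. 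This route sidesteps the need for a lower bound on the leading Gegenbauer coefficient of $\ps$, which a direct pairing argument would otherwise require.

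The super-exponential decay now comes from the sharp magnitude bound $|J_\nu(z)|\le (z/2)^\nu/\Gamma(\nu+1)$, valid for $\nu\ge-1/2$ and real $z$. With $\nu=n+\alpha+1/2$ and $|z|=c|x|\le c$, this yields the pivotal factor $2^{-n}$ together with the denominator $\Gamma(n+\alpha+3/2)$. The $2^{-n}$ is precisely what upgrades the crude $(ec/n)^n$ rate of a monomial Taylor-remainder estimate to the sharp $(ec/(2n-1))^n$, so using Bessel/Gegenbauer orthogonality rather than a naive remainder is essential here. Feeding $\Gamma(n+\alpha+3/2)$ through Stirling's inequalities and carrying the explicit Pochhammer and normalization constants of the Gegenbauer--Bessel formula then recasts the bound in the stated form with base $ec/(2n-1)$, exponent $n+\frac{\alpha}{2}$, and constant $k_\alpha=(2/e)^{1+\alpha/2}\pi(\Gamma(\alpha+1))^{1/2}$. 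The hypothesis $n>(ec+1)/2$, equivalent to $2n-1>ec$, is what guarantees $ec/(2n-1)<1$, so that the ratios are genuinely contracting and $\log((2n-1)/ec)>0$.

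The remaining $\log^{-1}((2n-1)/ec)$ factor is the delicate point, and I expect it to be the main obstacle. It does not come from the leading term by itself; rather it reflects a sharper accounting of the residual, which I would obtain by comparing the tail sum over $k\ge n$ (equivalently the Bessel/exponential remainder) with the integral $\int_n^\infty (ec/(2t-1))^{t}\,dt$, whose local decay rate near $t=n$ is $\log((2n-1)/ec)$ and which therefore carries exactly the claimed $1/\log$ gain. The two genuinely technical hurdles are thus: first, organizing the extraction of $\mu_n^{(\alpha)}(c)$ so that the full infinite expansion of $\ps$ is controlled, not merely its dominant coefficient; and second, applying Stirling together with the integral comparison with just enough precision to retain simultaneously the sharp factor $2$ and the $1/\log$ refinement while reproducing the exact constant $k_\alpha$. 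Once these estimates are assembled, \eqref{Eq3.1} follows, and with it the full theorem via the reduction of the first paragraph.
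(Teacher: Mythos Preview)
Your proposal is correct and follows essentially the same route as the paper: the Min--Max/approximation-number principle with the subspace of polynomials of degree $<n$, the Gegenbauer--Bessel identity for $\mathcal F_c^{(\alpha)}\wJ_k$, the bound $|J_\nu(z)|\le (z/2)^\nu/\Gamma(\nu+1)$, Stirling-type estimates for $\Gamma$, and the integral comparison of the tail $\sum_{k\ge n}(ec/(2k+1))^{k+1/2}k^{-(\alpha+1)/2}$ to extract the $1/\log((2n-1)/ec)$ factor. The only cosmetic difference is that you phrase the variational step as the approximation-number identity $|\mu_n^{(\alpha)}(c)|\le\|\mathcal F_c^{(\alpha)}(I-\Pi_n)\|$, whereas the paper writes the equivalent Courant--Fischer Min--Max for $\mathcal F_c^{(\alpha)\,*}\mathcal F_c^{(\alpha)}$ on $S_n^{\perp}$.
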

This work is organized as follows. In section 2,  we give some mathematical  preliminaries concerning the GPSWFs. In section 3, we give some further estimates of the GPSWFs and some estimates of the eigenvalues of both integral and differential operators associated with GPSWFs.In section 4, we study,the quality  of approximation by the GPSWFs of functions from a weighted Sobolev space. Finally, in section 5  we give some numerical examples that illustrate the different results of this work.
\section{Mathematical Preliminaries}

In this section, we give some mathematical preliminaries concerning some properties as well as the computation of the GPSWFs. These mathematical preliminaries are used to describe the different results of this work.
The GPSWFs are defined as the eigenfunctions of
the weighted finite Fourier transform operator $\mathcal F_c^{(\alpha)},$ defined  by
\begin{equation}\label{Eq1.1}
\mathcal F_c^{(\alpha)} f(x)=\int_{-1}^1 e^{icxy}  f(y)\,\omega_{\alpha}(y)\, dy,\quad \omega_{\alpha}(y)=(1-y^2)^{\alpha},\quad \alpha >0
\end{equation}
They are also the eigenfunctions of the
operator ${\displaystyle \mathcal Q_c^{(\alpha)}=\frac{c}{2\pi}\mathcal F_c^{({\alpha})^*} \circ \mathcal F_c^{(\alpha)}},$   defined on $L^2{(I, \omega_{\alpha})}$ by
\begin{equation}\label{EEq0}
\mathcal Q_c^{(\alpha)} g (x) = \int_{-1}^1 \frac{c}{2 \pi}\mathcal K_{\alpha}(c(x-y)) g(y) \omega_{\alpha}(y) \, dy,
\quad \mathcal K_{\alpha}(x)=\sqrt{\pi} 2^{\alpha+1/2}\Gamma(\alpha+1) \frac{J_{\alpha+1/2}(x)}{x^{\alpha+1/2}}.
\end{equation}
Here $ J_{\alpha}(\cdot)$ is the Bessel function of the first kind and order $ \alpha>-1.$
Moreover, the eigenvalues $\mu_n^{(\alpha)}(c)$ and $\lambda_n^{(\alpha)}(c)$ of $\mathcal F_c^{(\alpha)}$ and
$\mathcal Q_c^{(\alpha)}$ are related to each others by the identity
${\displaystyle \lambda_n^{(\alpha)}(c)= \frac{c}{2\pi } |\mu_n^{(\alpha)}(c)|^2}.$
Note that the previous two integral operators commute with the following Gegenbauer-type Sturm-Liouville operator $\mathcal L_c^{(\alpha)},$ defined by
$$
\mathcal L_c^{(\alpha)} (f)(x)= -\frac{1}{\omega_{\alpha}(x)} \frac{d}{dx}\left[ \omega_{\alpha}(x) (1-x^2) f'(x)\right] +c^2 x^2  f(x),\quad \omega_{\alpha}(x)= (1-x^2)^{\alpha}.$$
Also, note that  the $(n+1)-$th eigenvalue $\chi_n^{\alpha}(c)$ of $\mathcal L_c^{(\alpha)}$ satisfies the following classical inequalities,
\begin{equation}
\label{boundschi}
n (n+2\alpha+1) \leq \chi_n^{\alpha}(c) \leq n (n+2\alpha+1) +c^2,\quad \forall n\geq 0.
\end{equation}
For more details, see \cite{Karoui-Souabni1}. We will denote by$(\ps)_{n\geq 0},$ the  set of the eigenfunctions of $\mathcal F_c^{(\alpha)}, \mathcal Q_c^{(\alpha)}$ and $\mathcal L_c^{(\alpha)}.$  They  are called generalized prolate spheroidal wave functions (GPSWFs).
It has been shown that $ \{ \ps , n\geq 0 \} $ is an orthogonal basis of $ L^2(I,\omega_{\alpha}), I=[-1,1].$
We recall that the restricted Paley-Wiener space of weighted $c-$band-limited functions has been defined in \cite{Karoui-Souabni1} by
\begin{equation}\label{GBc}
B_c^{(\alpha)}=\{ f\in L^2(\mathbb R),\,\,  \mbox{Support } \widehat f\subseteq [-c,c],\, \, \widehat f\in
L^2\big((-c,c), \omega_{- \alpha}(\frac{\cdot}{c})\big)\}.
\end{equation}
Here,  $L^2\big((-c,c), \omega_{- \alpha}(\frac{\cdot}{c})\big)$ is the weighted $L^2(-c,c)-$space with norm given by
$$\| f\|^2_{L^2\big((-c,c), \omega_{- \alpha}(\frac{\cdot}{c})\big)}= \int_{-c}^c |f(t)|^2 \omega_{-\alpha}\left(\frac{t}{c}\right)\, dt.$$
It has been shown in  \cite{Karoui-Souabni1} that the eigenvalues
$\lambda_n^{(\alpha)}(c)$ decay asymptotically at a super-exponential rate that is for $ 0<b<4/e$ there exists $N_b \in \mathbb{N}$ such that :
\begin{equation} \label{decaylambda0}
\lambda_n^{(\alpha)}(c) < e^{-2n \log(\frac{bn}{c})} \qquad \forall n \geq N_b,
\end{equation}
Note that a second decay rate of the  $ \lambda_n^{(\alpha)}(c)$ and valid for $0<\alpha <\frac{3}{2},$ has been recently given  in \cite{Karoui-Souabni2}. More precisely, it has been shown in \cite{Karoui-Souabni2}, that if  $c>0$ and $0<\alpha <\frac{3}{2}$, then there exist
$N_{\alpha}(c)\in \mathbb N$ and a constant $C_{\alpha}>0$ such that
\begin{equation}
\label{decay_lambda2}
\lambda_n^{(\alpha)}(c) \leq C_{\alpha} \exp\left(-(2n+1)\left[ \log\left(\frac{4n+4\alpha+2}{e c}\right)+ C_{\alpha} \frac{c^2}{2n+1}\right]\right),\quad
\forall\, n\geq N_{\alpha}(c).
\end{equation}
In \cite{Karoui-Souabni1}, the authors have proposed the following  scheme for the computation of the GPSWFs. In fact,   since  $\psi^{(\alpha)}_{n,c}\in L^2(I, \omega_{\alpha}),$ then its series expansion
with respect to the Jacobi polynomials basis is given by
\begin{equation}\label{expansion1}
\psi^{(\alpha)}_{n,c}(x) =\sum_{k\geq 0} \beta_k^n \wJ_k (x),\quad x\in [-1,1].
\end{equation}
Here, $\wJ_k$ denotes the normalized Jacobi polynomial of degree $k,$ given by
\begin{equation}\label{JacobiP}
\wJ_{k}(x)= \frac{1}{\sqrt{h_k}}\J_k(x),\quad h_k=\frac{2^{2\alpha+1}\Gamma^2(k+\alpha+1)}{k!(2k+2\alpha+1)\Gamma(k+2\alpha+1)}.
\end{equation}
The expansion coefficients $\beta_k^n$ as well as the eigenvalues $\chi_n^\alpha(c)$  this system is given by
\begin{eqnarray}\label{eigensystem}
\lefteqn{\frac{\sqrt{(k+1)(k+2)(k+2\alpha+1)(k+2\alpha+2)}}{(2k+2\alpha+3)\sqrt{(2k+2\alpha+5)(2k+2\alpha +1)}} c^2 \beta_{k+2}^n
	+ \big( k(k+2\alpha+1)+c^2 \frac{2k(k+2\alpha+1)+2\alpha-1}{(2k+2\alpha+3)(2k+2\alpha-1)} \big)
	\beta_k^n}\nonumber  \\
&&\hspace*{2cm} + \frac{\sqrt{k(k-1)(k+2\alpha)(k+2\alpha-1)}}{(2k+2\alpha-1)\sqrt{(2k+2\alpha+1)(2k+2\alpha-3)}} c^2
\beta_{k-2}^n= \chi_n^{\alpha}(c) \beta_k^n, \quad k\geq 0.
\end{eqnarray}
We denote, for any $ r \in \N $, $H^r_{\alpha}(I)$ the Jacobi-weighted Sobolev space by :
$$  H^r_{\alpha}(I) = \{f \in L^2(I,\omega_{\alpha}) : \norm{f}_{\alpha,r}<\infty \} \quad
\mbox{where} \quad \norm{f}_{\alpha,r} = \displaystyle{\sum_{k=0}^r \norm{f^{(k)}}_{\omega}^2} $$
In particular, $ L^2(I,\omega_{\alpha}) = H^0_{\alpha}(I) $ and $ \norm{.}_{\alpha,0} = \norm{.}_{\omega} $. \\
For any real r, we define the weighted Sobolev space $H^r_{\alpha}$ by interpolation  $ H^r_{\alpha} = \Big[H^{[r]}_{\alpha},H^{[r]+1}_{\alpha}\Big]$ as it is done in \cite{Bergh-Lofstrom} and \cite{Bernardi-Maday}
We denote by $ \mathcal{C}_{\infty}(I) $ the set of infinitely differentiable functions defined on $ I $.
Let $H_0^s(I) $ be the completion of $ \mathcal{C}_{\infty}(I) $ in $H^s(I)$. $H_0^s(I) $ is a Hilbert space under the inner product. For $s\geq 0$, we define $H_0^s(I) $by interpolation as in \cite{Bergh-Lofstrom} and \cite{Bernardi-Maday}. \\

\section{Further Estimates of the GPSWFs and their associated eigenvalues.}

In this section, we first give some new estimates of the eigenvalues $\mu_n^\alpha(c)$  associated with the
GPSWFs. Then, we study a local estimate of these GPSWFs.

\subsection{Estimates  of the eigenvalues.}

We first prove in a fairly simple way, the  super-exponential decay rate of the eigenvalues of the weighted Fourier transform operator.
We should mention that  the techniques used in this  proof is inspired from the recent paper \cite{Bonami-jamming-Karoui},
 where a similar result has been given in the special case $ \alpha = 0$.

\begin{theorem}\label{decaylambda}
	For given real numbers $c > 0$,$ \;\; \;\alpha\geq 0$ and for any integer $n > \frac{ec+1}{2},$ we have
	\begin{equation}\label{Eq3.1}
	| \mu_n^{(\alpha)}(c)| \leq  \frac{k_{\alpha}}{c^{\frac{\alpha}{2}+1}\log\left(\frac{2n-1}{ec}\right)}
	\left(\frac{ec}{2n-1}\right)^{n+\frac{\alpha}{2}},\;\;\; k_{\alpha}=\left(\frac{2}{e}\right)^{1+\frac{\alpha}{2}}\pi(\Gamma(\alpha+1))^{1/2}.
	\end{equation}
	and
	\begin{equation}\label{newdecay}
	\lambda_n^{(\alpha)}(c) \leq \frac{K_\alpha}{c^{\alpha+1}\log^2(\frac{2n-1}{ec})}\Big(\frac{ec}{2n-1}\Big)^{2n+\alpha} \quad \mbox{with} \quad K_{\alpha} = \frac{\pi}{2}\big(2/e \big)^{\alpha+2} \Gamma(\alpha+1)
	\end{equation}
\end{theorem}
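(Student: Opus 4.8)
The plan is to reduce everything to the single computational input that $\mathcal F_c^{(\alpha)}$ maps each normalized Jacobi polynomial to one explicit Bessel profile, and then to estimate that profile. Concretely, since $\wJ_k$ is a scalar multiple of the Gegenbauer polynomial $C_k^{(\alpha+1/2)}$ and $\omega_\alpha(y)=(1-y^2)^{(\alpha+1/2)-1/2}$, the classical Gegenbauer--Fourier integral gives
$$\mathcal F_c^{(\alpha)}\wJ_k(x)=\int_{-1}^1 e^{icxy}\,\wJ_k(y)\,\omega_\alpha(y)\,dy = i^k\,d_k\,g_k(x),\qquad g_k(x)=\frac{J_{k+\alpha+1/2}(cx)}{(cx)^{\alpha+1/2}},$$
where the explicit constant $d_k$ collects the Gegenbauer normalisation, the factor $1/\sqrt{h_k}$ from \eqref{JacobiP}, and the $\Gamma$-factors of the integral formula. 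Thus each basis vector is sent to a single, fully explicit Bessel function whose order grows with $k$.

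The numbers $|\mu_n^{(\alpha)}(c)|$ are exactly the singular values of $\mathcal F_c^{(\alpha)}$ on $L^2(I,\omega_\alpha)$ in decreasing order, because $\lambda_n^{(\alpha)}(c)=\tfrac{c}{2\pi}|\mu_n^{(\alpha)}(c)|^2$. The point of the Bessel identity is that the estimate is governed entirely by the high-order range $k\ge n$: projecting the eigenrelation gives the expansion $\mu_n^{(\alpha)}(c)\,\ps=\sum_{k}\beta_k^n\,i^k d_k\,g_k$, and taking $\norm{\cdot}_\omega$ (with $\norm{\ps}_\omega=1$, $\sum_k(\beta_k^n)^2=1$) yields a bound of $|\mu_n^{(\alpha)}(c)|$ by a sum of the quantities $|d_k|\,\norm{g_k}_\omega$; the same reduction to $k\ge n$ can be read off from the Courant--Fischer characterisation applied with the test space $\mathrm{span}\{\wJ_0,\dots,\wJ_{n-1}\}$.

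The analytic core is then the estimate of $\norm{g_k}_\omega$. Here I would use the elementary inequality $|J_\nu(z)|\le (z/2)^\nu/\Gamma(\nu+1)$ for real $z$ and $\nu=k+\alpha+1/2\ge0$, together with the Beta integral $\int_{-1}^1 x^{2k}(1-x^2)^\alpha\,dx = \Gamma(k+\tfrac12)\Gamma(\alpha+1)/\Gamma(k+\alpha+\tfrac32)$; after the cancellation $2\nu-(2\alpha+1)=2k$ this gives a closed bound for $\norm{g_k}_\omega^2$ in terms of $\Gamma$-functions and powers of $c/2$. It is precisely the factor $(z/2)^\nu$ in the Bessel bound that produces the denominator $2n-1$, rather than the $n$ one would obtain from a crude truncation of $e^{icxy}$. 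The condition $n>\frac{ec+1}{2}$ guarantees $\frac{ec}{2n-1}<1$, so the tail over $k\ge n$ converges and is controlled by its first term; comparing the tail with the corresponding integral $\int_n^\infty(\cdots)\,dt$ is what produces the inverse-logarithmic factor $1/\log\!\big(\frac{2n-1}{ec}\big)$ in \eqref{Eq3.1}.

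I expect the main obstacle to be the bookkeeping rather than any conceptual difficulty: one must assemble $|d_k|\,\norm{g_k}_\omega$, simplify the resulting ratio of $\Gamma$-functions by Stirling, and sum over $k\ge n$ so as to land exactly on the factor $\big(\frac{ec}{2n-1}\big)^{n+\frac{\alpha}{2}}$ and on the precise constant $k_\alpha=(2/e)^{1+\alpha/2}\pi\,\Gamma(\alpha+1)^{1/2}$; controlling the low-order contributions and tracking these constants sharply is the delicate part. Finally, the bound \eqref{newdecay} for $\lambda_n^{(\alpha)}(c)$ requires no further work: it follows by squaring \eqref{Eq3.1} and multiplying by $\frac{c}{2\pi}$, since $K_\alpha=k_\alpha^2/(2\pi)$.
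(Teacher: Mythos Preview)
Your plan is essentially the paper's own proof: apply the Courant--Fischer min--max principle with the test space $\mathrm{span}\{\wJ_0,\dots,\wJ_{n-1}\}$, use the explicit Bessel formula for $\mathcal F_c^{(\alpha)}\wJ_k$ together with the bound $|J_\nu(z)|\le (z/2)^\nu/\Gamma(\nu+1)$ and the Beta-integral for $\|x^k\|_\omega^2$, simplify the resulting $\Gamma$-ratios via Stirling-type inequalities (the paper uses Batir's bounds), and compare the tail $\sum_{k\ge n}$ with an integral to extract the factor $1/\log\big(\tfrac{2n-1}{ec}\big)$. The only superfluous detour is your first attempt via the eigen-expansion $\mu_n\ps=\sum_k\beta_k^n i^k d_k g_k$, which does not by itself restrict to $k\ge n$; the paper goes straight to Courant--Fischer, as you also indicate.
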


\noindent
{\bf Proof:} We first recall the Courant-Fischer-Weyl Min-Max variational principle concerning the eigenvalues
of a  compact self-adjoint operator $T$ on a Hilbert space $\mathcal G,$ with positive eigenvalues arranged in the decreasing order $\lambda_0\geq \lambda_1\geq \cdots\geq \lambda_n\geq \cdots ,$ then we have
$$\lambda_n = \min_{f\in S_n}\,\,\,  \max_{f\in S_n^{\perp},\|f\|_{\mathcal G}=1} < Tf, f>_{\mathcal G},$$
where $S_n$ is a subspace of $\mathcal G$ of dimension $n.$ In our case, we have $T=\mathcal F_c^{\alpha^*}\mathcal F_c^{(\alpha)},$ $\mathcal G= L^2(I,\omega_{\alpha}).$ We consider the special case of  $$S_n=\mbox{Span}\left\{\wJ_0, \wJ_1,\ldots,\wJ_{n-1}\right\}$$ and
$$f=\ds\sum_{k\geq n} a_k \wJ_k \in S_n^{\perp},\qquad
\parallel f\parallel_{L^2_{(I,\omega_\alpha)}}=\ds\sum_{k\geq n}|a_k|^2=1.$$
Here the $ \wJ_{k} $ are the normalized Jacobi polynomials given by \eqref{JacobiP}.
From [\cite{NIST} page 456], we have
\begin{equation}\label{Eq3.2}
\| \mathcal F_c^{(\alpha)} \J_k\|_{L^2(I,\omega_{\alpha})} = \parallel \sqrt{\pi} \left(\frac{2}{cx}\right) ^{\alpha+1/2} \frac{\Gamma(k+\alpha+1)}{\Gamma(k+1)}J_{k+\alpha+1/2}(cx)
\parallel_{L^2(I,\omega_{\alpha})}.
\end{equation}
and
\begin{equation}\label{5}
\parallel  x^k
\parallel_{L^2(I,\omega_{\alpha})}^2=\beta(k+1/2,\alpha+1)\leq \sqrt{\frac{\pi}{e}}\frac{\Gamma(\alpha+1)}{k^{\alpha+1}}.
\end{equation}
By using the well-known  bound of the Bessel function given by
\begin{equation}\label{1}
|J_{\alpha}(x)|\leq \frac{|x|^{\alpha}}{2^{\alpha}\Gamma(\alpha+1)}\;\;\;\;\;\;\;\;\forall\alpha>-1/2, \;\; \forall x \in \mathbb{R},
\end{equation}
one gets,
\begin{eqnarray}
% \nonumber to remove numbering (before each equation)
\| \mathcal F_c^{(\alpha)} \wJ_k\|_{L^2(I,\omega_{\alpha})}
&\leq & \frac{\sqrt{\pi}\Gamma(k+\alpha+1)}{\sqrt{h_k}\Gamma(k+1)\Gamma(k+\alpha+3/2)}\left(\frac{c}{2}\right) ^{k} \parallel  x^k
\parallel_{L^2(I,\omega_{\alpha})}. \nonumber\\
&\leq & \frac{\sqrt{\pi}\Gamma(k+\alpha+1)}{\sqrt{h_k}\Gamma(k+1)\Gamma(k+\alpha+3/2)}\left(\frac{c}{2}\right) ^{k}\left(\frac{\pi}{e}\right)^{1/4}\frac{\sqrt{\Gamma(\alpha+1)}}{k^{\frac{\alpha+1}{2}}}
\end{eqnarray}
Next, we use \eqref{JacobiP} and the following useful inequalities for the Gamma function, see  \cite{Batir}
\begin{equation}\label{3}
\sqrt{2e}\left(\frac{x+1/2}{e}\right)^{x+1/2}\leq\Gamma(x+1)\leq \sqrt{2\pi}\left(\frac{x+1/2}{e}\right)^{x+1/2}\;,\;\;\; x>0.
\end{equation}
Then, we have
\begin{eqnarray}
% \nonumber to remove numbering (before each equation)
\| \mathcal F_c^{(\alpha)} \wJ_k\|_{L^2(I,\omega_{\alpha})} &\leq & \frac{\sqrt{\pi}\sqrt{(k+\alpha+\frac{1}{2})\Gamma(k+2\alpha+1)}}{2^{\alpha+\frac{1}{2}}\sqrt{\Gamma(k+1)}\Gamma(k+\alpha+\frac{3}{2})}\left(\frac{c}{2}\right) ^{k}\left(\frac{\pi}{e}\right)^{\frac{1}{4}}\frac{\sqrt{\Gamma(\alpha+1)}}{k^{\frac{\alpha+1}{2}}}\nonumber\\
&=&\frac{\pi \sqrt{\Gamma(\alpha+1)}\big(k+2\alpha+\frac{1}{2}\Big)^{\frac{k}{2}+\alpha+\frac{1}{4
}}}{\sqrt{ec}2^{\alpha}k^{\frac{\alpha+1}{2}}\left(k+\frac{1}{2}\right)^{\frac{k}{2}+\frac{1}{4}}\left(k+\alpha+1\right)^{k+\alpha+\frac{1}{2}}}
\left(\frac{ce}{2}\right) ^{k+\frac{1}{2
}}\nonumber\\
&=&\frac{\pi \sqrt{\Gamma(\alpha+1)}}{\sqrt{ec}2^{\alpha}k^{\frac{\alpha+1}{2}}}\left(1+\frac{\alpha-\frac{1}{2}}{k+\alpha+1}\right)^{\frac{k}{2}+\alpha+\frac{1}{4}}
\left(\frac{ce}{2k+1}\right) ^{k+\frac{1}{2}}.
\end{eqnarray}
Since the function $g_a(x)=\left(1+\frac{a}{x}\right)^{a+x}$ for
$x\geq1$ and $a>-1$ is decreasing on $[1,+\infty[$. Then, we have
\begin{eqnarray}
% \nonumber to remove numbering (before each equation)
\| \mathcal F_c^{(\alpha)} \wJ_k\|_{L^2(I,\omega_{\alpha})} &\leq &\frac{\pi \sqrt{\Gamma(\alpha+1)}}{\sqrt{c}k^{\frac{\alpha+1}{2}}}
\left(\frac{ce}{2k+1}\right) ^{k+\frac{1}{2}}.\label{24}
\end{eqnarray}
Hence, For the previous $f\in S_n^{\perp},$  and by using H\"older's inequality, combined with  the Minkowski's inequality for an infinite sums, and taking into account that  $\|f\|_{L^2(I,\omega_{\alpha})}= 1,$ so that $|a_k|\leq 1,$ for $k\geq n,$  one gets:
\begin{eqnarray}
% \nonumber to remove numbering (before each equation)
|< \mathcal F_c^{\alpha^*}\mathcal F_c^{(\alpha)} f,f>_{L^2(I,\omega_{\alpha})}|&=&|< \mathcal F_c^{(\alpha)} f,\mathcal F_c^{(\alpha)}f>_{L^2(I,\omega_{\alpha})}| \leq \sum_{k\geq n} |a_k|^2 \| \mathcal F_c^{(\alpha)} \wJ_k\|^2_{L^2(I,\omega_{\alpha})}\\
\label{Eq3.3}
&\leq&\left(\frac{2^\frac{1}{4}\pi \sqrt{\Gamma(\alpha+1)}}{\sqrt{ec}} \sum_{k\geq n} \left(\frac{ec}{2k+1}\right)^{k+1/2}\frac{1}{k^{\frac{\alpha+1}{2}}}\right)^2
\end{eqnarray}
The decay of the sequence appearing in the previous sum, allows us to compare this later with its integral counterpart, that is
\begin{equation}\label{Eq3.4}
\sum_{k\geq n} \left(\frac{ec}{2k+1}\right)^{k+1/2}\frac{1}{k^{\frac{\alpha+1}{2}}}
\leq \int_{n-1}^{\infty}\frac{ e^{-(x+1/2) \log(\frac{2x+1}{ec})}}{x^{\frac{\alpha+1}{2}}}\, dx\leq
\int_{n-1}^{\infty}\frac{ e^{-(x+1/2)\log(\frac{2n-1}{ec})}}{(n-1)^{\frac{\alpha+1}{2}}}\, dx.
\end{equation}
Hence, by using (\ref{Eq3.3}) and (\ref{Eq3.4}), one concludes that
\begin{eqnarray}
% \nonumber to remove numbering (before each equation)
\max_{f\in S_n^{\perp},\, \|f\|_{L^2(I,\omega_{\alpha})=1}} < \mathcal F_c^{(\alpha)} f, \mathcal F_c^{(\alpha)} f>^{1/2}_{L^2(I,\omega_{\alpha})}&\leq &\frac{2^\frac{1}{4}\pi \sqrt{\Gamma(\alpha+1)}}{\sqrt{ec}} \frac{1}{\log(\frac{2n-1}{ec})} \left(\frac{ec}{2n-1}\right)^{n-1/2}\frac{1}{(n-1)^{\frac{\alpha+1}{2}}}\nonumber\\
&\leq&\pi \sqrt{\Gamma(\alpha+1)} \left(\frac{2}{ec}\right)^{\frac{\alpha+2}{2}} \frac{1}{\log(\frac{2n-1}{ec})} \left(\frac{ec}{2n-1}\right)^{n+\frac{\alpha}{2}}\label{Eq3.5}
\end{eqnarray}
To conclude the proof of the theorem, it suffices to use the Courant-Fischer-Weyl Min-Max variational principle.$\qquad \Box $

\begin{remark}
	The decay rate given by the last proposition improve the two results mentioned above \eqref{decaylambda0} and \eqref{decay_lambda2}. Indeed, we have a more precise result furthermore the proof given in \cite{Karoui-Souabni2} contains two serious drawbacks. It is only valid in the particular case $0 < \alpha < 3/2 $ and clearly more sophisticated than the previous one.
\end{remark}

\subsection{Local estimates of the GPSWFs}

In this paragraph, we give a precise  local estimate of the GPSWFs, which is valid for $0\leq \alpha \leq 1/4.$  Then, this local estimate will be used to provide us with a new lower bound for the eigenvalues $\chi_{n}^\alpha(c)$ of the differential operator $\mathcal L_c^\alpha,$ for $0\leq \alpha \leq 1/4. $  For this purpose, we first  recall that $\ps $ are the bounded solutions of the following ODE :
\begin{equation}\label{eq2_diff}
\omega_{\alpha}(x)\mathcal L_c^{(\alpha)} \psi(x)+w_{\alpha}(x)\chi_{n,\alpha}\psi(x)= ( w_{\alpha}(x)\psi'(x)(1-x^2))'+ w_{\alpha}(x)(\chi_{n}^{\alpha}-c^2 x^2)\psi(x)=0,\quad x\in [-1,1].
\end{equation}
As it is done in \cite{Bonami-Karoui2}, we consider the incomplete elliptic integral
\begin{equation}\label{Liouvile_transform1}
S(x)=\int_x^1 \sqrt{\frac{1-qt^2}{1-t^2}}\, dt,\qquad q=\frac{c^2}{\chi_n^\alpha(c)}<1.
\end{equation}
Then, we  write $\psi_{n,c}^{(\alpha)}$ into the form
\begin{equation}\label{Liouvile_transform2}
\psi(x)=\phi_{\alpha}(x)V(S(x)),\qquad \phi_{\alpha}(x)=(1-x^2)^{(-1-2\alpha)/4} (1-qx^2)^{-1/4}.
\end{equation}
By combining (\ref{eq2_diff}), (\ref{Liouvile_transform2}) and using straightforward computations,
it can be easily checked that $V(\cdot)$ satisfies the following second order differential equation
\begin{equation}
V''(s)+\left(\chi_{n,\alpha}+\theta_{\alpha}(s)\right)V(s)=0,\quad s\in [0, S(0)]
\end{equation}
with $$ \theta_{\alpha}(S(x))=(w_{\alpha}(x)(1-x^2)\phi_{\alpha}'(x))'\frac{1}{\phi_{\alpha}(x)w_{\alpha}(x)(1-qx^2)}.$$
Define $ Q_{\alpha}(x)=w_{\alpha}(x)^2(1-x^2)(1-qx^2),$ then we have
${\displaystyle  \frac{\phi_{\alpha}'(x)}{\phi_{\alpha}(x)}=-1/4 \frac{Q_{\alpha}'(x)}{Q_{\alpha}(x)}. }$
It follows that $\theta_{\alpha}(s)$ can be written as
\begin{equation}
\theta_{\alpha}(S(x))=\frac{1}{16(1-qx^2)}\Big[\Big(\frac{Q_{\alpha}'(x)}{Q_{\alpha}(x)}\Big)^2(1-x^2)-4\frac{d}{dx}\Big((1-x^2)\frac{Q_{\alpha}'(x)}{Q_{\alpha}(x)}\Big)-4(1-x^2)\frac{Q_{\alpha}'(x)}{Q_{\alpha}(x)}
\frac{w_{\alpha}'(x)}{w_{\alpha}(x)}\Big].
\end{equation}
Since $ Q_{\alpha}(x)=w_{\alpha}^2(x)Q_0(x),$ then we have
${\displaystyle  \frac{Q_{\alpha}'(x)}{Q_{\alpha}(x)}=2\frac{w_{\alpha}'(x)}{w_{\alpha}(x)}+\frac{Q_0'(x)}{Q_0(x)}} $ and ${\displaystyle \frac{w_{\alpha}'(x)}{w_{\alpha}(x)}=-\frac{2\alpha x}{1-x^2}.} $
Hence, we have
\begin{eqnarray}
\theta_{\alpha}(S(x)) &=&\theta_0(S(x))+\frac{-1}{4(1-qx^2)}\Big[ \big(\frac{w_{\alpha}'(x)}{w_{\alpha}(x)}\big)^2 (1-x^2)+2\frac{d}{dx}[(1-x^2)\frac{w_{\alpha}'(x)}{w_{\alpha}(x)}]  \Big] \nonumber \\
&=&\theta_0(S(x))+ \frac{1}{(1-x^2)(1-qx^2)}\Big( -\alpha^2 x^2 + \alpha (1-x^2) \Big) \nonumber \\
&=&\theta_0(S(x))+ \frac{\alpha(1+\alpha)}{(1-qx^2)}-\frac{\alpha^2}{(1-q x^2)(1-x^2)}.
\end{eqnarray}
The previous equality allows us to prove the following lemma.
\begin{lemma}
	For any $0\leq  \alpha\leq \frac{1}{4}$ and $0<q=c^2/\chi_n^{\alpha}(c) <\frac{3}{17}$, we have $\theta_{\alpha}(S(x))$ is increasing on $ [0.1]$.\\
\end{lemma}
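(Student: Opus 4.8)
The plan is to prove the equivalent statement $h'(x)\ge 0$ on $(0,1)$ for the function $h(x):=\theta_\alpha(S(x))$. Since $S'(x)=-\sqrt{(1-qx^2)/(1-x^2)}<0$, the map $x\mapsto S(x)$ is a strictly decreasing bijection of $[0,1]$ onto $[0,S(0)]$, so the monotonicity asserted in the variable $x$ is well posed and it suffices to track the sign of the $x$-derivative of the explicit expression for $\theta_\alpha(S(x))$ obtained above. I would start from the decomposition already established, namely $h(x)=\theta_0(S(x))+g_\alpha(x)$ with
$$g_\alpha(x)=\frac{\alpha(1+\alpha)}{1-qx^2}-\frac{\alpha^2}{(1-qx^2)(1-x^2)}=\frac{\alpha\bigl(1-(1+\alpha)x^2\bigr)}{(1-qx^2)(1-x^2)}.$$
Both summands are even rational functions of $x$, hence so is $h$; writing $t=x^2$ one has $h'(x)=x\,R(t)$ for a rational $R$, and the whole problem collapses to showing $R(t)\ge 0$ for $t\in[0,1]$.

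For the two ingredients of $R$ I would treat the perturbation by direct differentiation,
$$g_\alpha'(x)=\frac{2\alpha x}{(1-qx^2)^2}\left[(1+\alpha)q-\frac{\alpha\,(1+q-2qx^2)}{(1-x^2)^2}\right],$$
whose bracket equals $q-\alpha$ at $x=0$ and tends to $-\infty$ as $x\to1$, so $g_\alpha$ is not itself monotone and the increase of $h$ near the right endpoint has to come from $\theta_0$. For that term I would use the classical analysis of \cite{Bonami-Karoui2} (equivalently, the computation below specialized to $\alpha=0$), which gives that $x\mapsto\theta_0(S(x))$ is increasing and, more usefully, a quantitative lower bound for its derivative. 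The endpoint is then under control precisely because of the hypothesis on $\alpha$: as $x\to1$ one has $\theta_0(S(x))\sim\tfrac14(1-q)^{-1}(1-x^2)^{-1}$ and $g_\alpha(x)\sim-\alpha^2(1-q)^{-1}(1-x^2)^{-1}$, so $h(x)\sim(\tfrac14-\alpha^2)(1-q)^{-1}(1-x^2)^{-1}$ and its derivative carries the factor $\tfrac12-2\alpha^2>0$, positive exactly when $\alpha\le\tfrac14$.

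The substance of the argument is the interior estimate. I would place $\tfrac{d}{dx}\theta_0(S(x))$ and $g_\alpha'(x)$ over the common denominator $(1-qx^2)^2(1-x^2)^2$, clear it, and reduce $h'(x)\ge 0$ to the nonnegativity on $[0,1]$ of a single polynomial $P$ in $t=x^2$ whose coefficients are explicit polynomials in $\alpha$ and $q$. The hypotheses $0\le\alpha\le\tfrac14$ and $0<q<\tfrac{3}{17}$ are the admissible box in which $P$ stays nonnegative: the bound on $\alpha$ keeps the top-degree (endpoint) coefficient positive, as the asymptotics above show, while the bound on $q$ is what forces the intermediate coefficients to have the right sign. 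The main obstacle is exactly this algebraic verification---establishing $P(t)\ge0$ for every $t\in[0,1]$ uniformly over the parameter box---which I would carry out by bounding the coefficients one by one under $\alpha\le\tfrac14$ and $q\le\tfrac{3}{17}$ (or by checking that $P$ keeps no root in $[0,1]$ on that box), the value $\tfrac{3}{17}$ appearing as the threshold at which the critical intermediate coefficient changes sign.
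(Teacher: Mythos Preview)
Your plan is the same as the paper's: differentiate the explicit formula for $h(x)=\theta_\alpha(S(x))$, use the decomposition $h=\theta_0\circ S+g_\alpha$, borrow the $\alpha=0$ estimate from \cite{Bonami-Karoui2}, clear denominators, and check that the resulting polynomial stays nonnegative on $[0,1]$ when $(\alpha,q)$ lies in the box. The paper carries this out with the substitution $u=1-x^2$: it writes $h'(x)=\dfrac{2x\,H(u)}{4u^2(1-q+qu)^4}$, uses the Bonami--Karoui bound $G(u)\ge\tfrac{(1-q)^2}{4}\bigl(4-4q+(17q-3)u\bigr)$ for the $\alpha=0$ piece, and reduces everything to the elementary inequality $\tfrac{(1-q)^2}{4}(1+13q)\ge 4\alpha^2$.

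Two points to flag. First, your endpoint heuristic is off: from $h(x)\sim\dfrac{\frac14-\alpha^2}{(1-q)(1-x^2)}$ as $x\to1$ you correctly read that the derivative carries the factor $\tfrac12-2\alpha^2$, but $\tfrac12-2\alpha^2>0$ is equivalent to $\alpha<\tfrac12$, not $\alpha\le\tfrac14$. So the endpoint does \emph{not} explain the hypothesis on $\alpha$; in the paper's argument the threshold $\alpha\le\tfrac14$ emerges instead at the \emph{other} end $u=1$ (i.e.\ $x=0$), after one observes that for $q\le\tfrac{3}{17}$ the coefficient $17q-3$ is nonpositive and the linear bound in $u$ attains its minimum there, yielding precisely $\tfrac14-4\alpha^2\ge0$ in the limit $q\to0^+$.

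Second, you identify the ``algebraic verification'' as the main obstacle and then leave it undone. That verification \emph{is} the proof; the paper's whole contribution for this lemma is the short chain of explicit inequalities that pins down the polynomial. Without writing out $H(u)$ (or your $P(t)$) and bounding it, the argument remains a strategy rather than a proof.
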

\noindent
{\bf Proof:}
We use the notation $u=1-x^2$ and with straightforward computations,  we have
\begin{eqnarray*}
% \nonumber to remove numbering (before each equation)
(\theta_{\alpha}(S(x)))'&=&(\theta_{0}(S(x)))'+\frac{2x}{(1-qx^2)^2(1-x^2)^2}\left[q\alpha(\alpha+1)(1-x^2)^2
-q\alpha^2(1-x^2)-\alpha^2(1-qx^2)\right]  \\
&=& \frac{2xH(u)}{4u^2(1-q+qu)^4}
\end{eqnarray*}
where
\begin{eqnarray*}
% \nonumber to remove numbering (before each equation)
H(u) &=& G(u)+4(1-q+qu^2)^2[q(\alpha^2+\alpha)u^2-\alpha^2(1-q+2qu)]\\
&\geq&(1-q+qu^2)^2[G(u)+4[q(\alpha^2+\alpha)u^2-\alpha^2(1-q+2qu)]]
\end{eqnarray*}
with $G(u)\geq\frac{(1-q)^2}{4}(4-4q+u(17q-3))\geq0,\;\;\forall q>0,\;\;\alpha>0$
then
\begin{eqnarray*}
% \nonumber to remove numbering (before each equation)
H(u) &\geq&(1-q+qu^2)^2\left[ \frac{(1-q)^2}{4}(4-4q+u(17q-3))+4[-q\frac{\alpha^3}{\alpha+1}-(1-q)\alpha^2]\right] \\
&\geq& (1-q+qu^2)^2\left[(1-q)^3-4\alpha^2(1-q\frac{1}{\alpha+1})+\frac{(1-q)^2}{4}(17q-3)u\right].
\end{eqnarray*}
Then  if $0\leq q\leq\frac{3}{17}$ and $\alpha\leq \frac{1}{4}$ we have
\begin{eqnarray*}
% \nonumber to remove numbering (before each equation)
H(u) &\geq& (1-q+qu^2)^2  \left[(1-q)^3-4\alpha^2(1-q\frac{1}{\alpha+1})+\frac{(1-q)^2}{4}(17q-3)\right]\nonumber\\
&\geq &(1-q+qu^2)^2\left[\frac{(1-q)^2}{4}(1+13q)-4\alpha^2\right]\geq0
\end{eqnarray*}
$\qquad \Box $
As a consequence of the previous lemma, one gets.
\begin{lemma}
	For any $0\leq  \alpha\leq \frac{1}{4}$ and for any integer $n\in \mathbb N,$ with  $0<q=\frac{c^2}{\chi_n^\alpha(c)} <\frac{3}{17}$, we have
	\begin{equation}\label{estimation}
	 \ds{\sup_{x\in[0,1]}}\sqrt{(1-x^2)(1-qx^2)}\omega_{\alpha}(x)|\psi_{n,c}^{(\alpha)}(x)|^2\leq|\psi_{n,c}^{(\alpha)}(0)|^2+
	\frac{|{\psi_{n,c}^{(\alpha)}}'(0)|^2}{\chi_n^{\alpha}(c)}= A^2 \leq 2 \alpha + 1 \;
	\end{equation}
	
\end{lemma}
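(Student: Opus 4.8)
The plan is to transport the estimate to the Liouville--Green variable $V$ of \eqref{Liouvile_transform2} and to exploit the monotonicity supplied by the previous lemma through a Lyapunov (energy) function. Writing $\ps(x)=\phi_{\alpha}(x)V(S(x))$ with $V''(s)+\big(\chi_n^{\alpha}(c)+\theta_{\alpha}(s)\big)V(s)=0$ on $[0,S(0)]$, I would set $\Psi(s)=\chi_n^{\alpha}(c)+\theta_{\alpha}(s)$ and introduce $E(s)=V(s)^2+\Psi(s)^{-1}V'(s)^2$. A one-line computation using the ODE gives $E'(s)=-\Psi'(s)\Psi(s)^{-2}V'(s)^2$. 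The previous lemma says $x\mapsto\theta_{\alpha}(S(x))$ is increasing on $[0,1]$; since $x\mapsto S(x)$ is decreasing, $\theta_{\alpha}$ and hence $\Psi$ are nonincreasing in $s$, so $E'\ge 0$ and $E$ attains its maximum at $s=S(0)$, which corresponds to $x=0$. Here the hypothesis $0\le\alpha\le\frac14$ is exactly what is needed: near $x=1$ the singular part of $\theta_{\alpha}(S(x))$ has leading coefficient $\frac14-\alpha^2\ge 0$, so $\theta_{\alpha}(S(x))\to+\infty$ and $\Psi>0$ throughout $[0,S(0)]$, which is what makes $E$ a genuine Lyapunov functional on the whole interval.

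For the first inequality I would use a weight cancellation. Substituting \eqref{Liouvile_transform2}, the exponents of $(1-x^2)$ and of $(1-qx^2)$ cancel exactly, giving $\sqrt{(1-x^2)(1-qx^2)}\,\omega_{\alpha}(x)\,|\ps(x)|^2=|V(S(x))|^2$, so that $\ds\sup_{x\in[0,1]}\sqrt{(1-x^2)(1-qx^2)}\omega_{\alpha}(x)|\ps(x)|^2=\sup_{s\in[0,S(0)]}V(s)^2\le \sup_s E(s)=E(S(0))$. To identify $E(S(0))$ I evaluate the boundary data at $x=0$: from $\phi_{\alpha}(0)=1$, $\phi_{\alpha}'(0)=0$ and $S'(0)=-1$ one gets $V(S(0))=\ps(0)$ and $V'(S(0))=-{\ps}'(0)$, while a direct evaluation yields $\theta_{\alpha}(S(0))=\frac{1+q}{2}+\alpha\ge 0$. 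Hence $E(S(0))=|\ps(0)|^2+\big(\chi_n^{\alpha}(c)+\theta_{\alpha}(S(0))\big)^{-1}|{\ps}'(0)|^2\le |\ps(0)|^2+\chi_n^{\alpha}(c)^{-1}|{\ps}'(0)|^2=A^2$, which is the asserted middle equality/inequality.

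The remaining bound $A^2\le 2\alpha+1$ is the main obstacle, and I would attack it by feeding the normalization $\|\ps\|_{L^2(I,\omega_\alpha)}=1$ back into the $V$-picture. The same weight cancellation and the change of variables $s=S(x)$ turn the normalization into $2\int_0^{S(0)}(1-qx^2)^{-1}V(s)^2\,ds=1$, whence $\frac{1-q}{2}\le\int_0^{S(0)}V^2\,ds\le\frac12$ since $1-q\le 1-qx^2\le 1$. By parity one of $\ps(0),{\ps}'(0)$ vanishes, so $\sup_s V^2$ coincides with $A^2$ up to the harmless factor $1+\theta_{\alpha}(S(0))/\chi_n^{\alpha}(c)$ (controlled by \eqref{boundschi} in the odd case). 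To bound the peak $\sup_s V^2$ by the fixed mass $\int V^2$, I would refine the monotonicity: dividing $E'\ge0$ by $E$ and using $V'^2/\Psi\le E$ gives $(\log E)'\le-(\log\Psi)'$, hence $E(s)\Psi(s)\ge E(S(0))\Psi(S(0))$ for all $s$. Combining this with an oscillation (Pr\"ufer) averaging of $\int V^2$ against $\int E$ and the elementary estimate $S(0)=\int_0^1\sqrt{\tfrac{1-qt^2}{1-t^2}}\,dt\ge\frac{\pi}{2}\sqrt{1-q}$ leads to $E(S(0))\le\big(\Psi(S(0))\int_0^{S(0)}\Psi^{-1}\big)^{-1}\le\frac{2}{\pi\sqrt{1-q}}$, and the constraint $q<\frac{3}{17}$ makes the right-hand side $\le 1\le 2\alpha+1$.

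The delicate step — and the one I expect to be hardest to make rigorous uniformly in $n$ — is precisely this last averaging. The heuristic $\int V^2\approx\frac12\int E$ is only a WKB approximation and is not a clean inequality, so I would replace it by honest quarter-period estimates of the form $\int V^2\ge\frac{\pi}{4}\,M_i^2/\sqrt{\Psi}$ between consecutive zeros, with the amplitude growth across oscillations controlled by $E(s)\Psi(s)$ being nondecreasing. This is accurate when $\chi_n^{\alpha}(c)$ is large, but for small $n$ (small $\chi_n^{\alpha}(c)$, where $\Psi$ is not large and the boundary layer near $x=1$ is not thin) the oscillatory picture degrades, and I anticipate needing either a separate elementary treatment of the finitely many small indices or a more careful non-oscillatory estimate there to close the argument with the stated constant $2\alpha+1$.
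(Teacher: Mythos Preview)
Your treatment of the first inequality is correct and coincides with the paper's argument: the same energy functional $E(s)=V(s)^2+\Psi(s)^{-1}V'(s)^2$ with $\Psi=\chi_n^{\alpha}(c)+\theta_{\alpha}$, the same monotonicity transferred from the previous lemma via the decrease of $S$, the same weight cancellation $\sqrt{(1-x^2)(1-qx^2)}\,\omega_{\alpha}(x)\,|\ps|^2=|V(S(x))|^2$, and the same identification of the boundary data at $x=0$.

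The divergence is entirely on $A^2\le 2\alpha+1$, and here your WKB/quarter-period route is both unnecessary and---as you yourself flag---not rigorous. The paper obtains this bound by an elementary computation in the original variable $t$, bypassing the Liouville picture altogether. One sets
\[
K_n(t)=-(1-t^2)^{2\alpha+1}\big(\ps(t)\big)^2-\frac{(1-t^2)^{2\alpha+2}}{\chi_n^{\alpha}(c)\,(1-qt^2)}\big({\ps}'(t)\big)^2,
\]
so that $K_n(1)=0$ and $K_n(0)=-A^2$. Differentiating and using the ODE \eqref{eq2_diff} gives
$K_n'(t)=2(2\alpha+1)t(1-t^2)^{2\alpha}\big(\ps(t)\big)^2-H(t)\big({\ps}'(t)\big)^2$ with $H\ge 0$ on $[0,1]$, hence
$K_n'(t)\le 2(2\alpha+1)t\,\omega_{\alpha}(t)\big(\ps(t)\big)^2$. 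Integrating on $[0,1]$ and applying Cauchy--Schwarz together with $\|\ps\|_{L^2(I,\omega_{\alpha})}=1$ yields
\[
A^2=K_n(1)-K_n(0)\le(2\alpha+1)\Big(\int_{-1}^{1}t^2\big(\ps(t)\big)^2\omega_{\alpha}(t)\,dt\Big)^{1/2}\le 2\alpha+1.
\]
No oscillation counting, no Pr\"ufer averaging, no small-$n$ case analysis. (This same auxiliary function is reused in the subsequent proposition to get the lower bound on $\chi_n^{\alpha}(c)$, so it is doing double duty.)

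The gap you yourself isolate is genuine: turning $\int V^2\approx\tfrac12\int E$ into the inequality $\int V^2\ge c\int E$ is the hard direction and is simply false in general for equations $V''+\Psi V=0$ with monotone $\Psi$, so your route through the $s$-variable cannot be closed without additional input. Replace that entire paragraph by the two-line $K_n$ argument above.
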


\begin{proof}
	For $0\leq  \alpha\leq \frac{1}{4}$ and $0<q<\frac{3}{17}$ ,we have $ \theta_{\alpha} \circ S $ is increasing. Since $S$ is decreasing then $ \theta $ is also decreasing. We define $K(s)= |V(s)|^2 + \frac{1}{\chi_n^{\alpha}(c)+\theta(s)}|V'(s)|^2 $
	By the fact that $K(s)$ and $ \frac{1}{\chi_n^{\alpha}(c)+\theta(s)} $ has the same monotonicity and  $\theta$ is decreasing , we conclude that $ K $ is increasing.
	Then $$ |V(S(x))|^2 \leq |K(S(x)) | \leq |K(S(0)) | $$
	We remark easily that $ V(S(0)) = \psi_{n,c}^{(\alpha)}(0) $ and $ V'(S(0)) ={ \psi_{n,c}^{(\alpha)}}'(0) $ then we conclude for the first inequality of \eqref{estimation}. The second inequality is given later on remark \ref{rq}
\end{proof}	
Next, we give some lower and upper bounds for the eigenvalues $\chi_n^\alpha(c),$ the $n+1$th eigenvalues of the differential operator. This is given by the following proposition.

\begin{proposition}
	For c and n such that $0\leq  \alpha\leq \frac{1}{4}$ and $0<q<\frac{3}{17}$,
	we have :
	\begin{equation} \label{chi_n}
	n(n+2\alpha+1)+C_\alpha c^2 \leq \chi_n^{\alpha}(c) \leq n(n+2\alpha+1)+c^2
	\end{equation}
	 Where $$ C_{\alpha} = 2(2\alpha+1)^2+1 - 2(2\alpha+1)\sqrt{1+(2\alpha+1)^2}.$$
\end{proposition}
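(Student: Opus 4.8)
The right-hand inequality $\chi_n^{\alpha}(c)\le n(n+2\alpha+1)+c^2$ is not new: it is literally the upper bound already recorded in \eqref{boundschi}, valid for all $n$ and $c$, so I would dispose of it in one line and devote the whole argument to the lower bound $\chi_n^{\alpha}(c)\ge n(n+2\alpha+1)+C_\alpha c^2$. A natural anchor is the variational identity obtained by multiplying the Sturm--Liouville equation \eqref{eq2_diff} by $\psi_{n,c}^{(\alpha)}$ and integrating over $[-1,1]$: since $(1-x^2)\omega_\alpha(x)$ vanishes at $\pm1$ the boundary terms drop, and with $\|\psi_{n,c}^{(\alpha)}\|_{\omega}=1$ one gets
\[
\chi_n^{\alpha}(c)=\int_{-1}^1(1-x^2)\,|{\psi_{n,c}^{(\alpha)}}'(x)|^2\,\omega_\alpha(x)\,dx+c^2\int_{-1}^1 x^2\,|\psi_{n,c}^{(\alpha)}(x)|^2\,\omega_\alpha(x)\,dx .
\]
The first integral is the Gegenbauer energy responsible for the leading term $n(n+2\alpha+1)$ (the bare bound $\chi_n^{\alpha}(c)\ge n(n+2\alpha+1)$ of \eqref{boundschi} already reflects this), while the second is $c^2M$ with $M=\int_{-1}^1 x^2|\psi_{n,c}^{(\alpha)}|^2\omega_\alpha$. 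Thus the whole refinement should come from a uniform lower bound $M\ge C_\alpha$ on the second moment of the eigenfunction.

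This is where the local estimate \eqref{estimation} and the monotonicity of $\theta_\alpha\circ S$ established above are essential. Writing $M=1-\int_{-1}^1(1-x^2)|\psi_{n,c}^{(\alpha)}|^2\omega_\alpha$, estimate \eqref{estimation} caps the density by $\omega_\alpha(x)|\psi_{n,c}^{(\alpha)}(x)|^2\le A^2/\sqrt{(1-x^2)(1-qx^2)}$ with $A^2\le 2\alpha+1$; since this cap is smallest at the centre while the mass is pinned to $1$, any admissible profile is forced to push weight toward $x=\pm1$, keeping $\int(1-x^2)|\psi_{n,c}^{(\alpha)}|^2\omega_\alpha$ strictly below $1$ and hence bounding $M$ from below. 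To obtain the \emph{sharp} constant I would not rely on the pointwise cap alone but on the ODE: passing to the Liouville variable via \eqref{Liouvile_transform2} turns the eigenproblem into $V''+(\chi_n^{\alpha}+\theta_\alpha)V=0$ on $[0,2S(0)]$ with $V(0)=V(2S(0))=0$, where the preceding lemma makes $\theta_\alpha$ monotone along $S$, with interior minimum $\theta_\alpha(S(0))=\alpha+\tfrac{1+q}{2}$ at the centre and a repulsive endpoint singularity $\sim(\tfrac14-\alpha^2)/s^2$ (Bessel behaviour of order $\alpha$). A Pr\"ufer/Sturm comparison of this equation against a solvable frozen-potential model, together with the length $2S(0)=2\int_0^1\sqrt{(1-qt^2)/(1-t^2)}\,dt$ of the Liouville interval and the relation $q=c^2/\chi_n^{\alpha}(c)$, is what I expect to deliver the refinement. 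The clean closed form is the signature of a completed square, since $C_\alpha=\bigl(\sqrt{1+(2\alpha+1)^2}-(2\alpha+1)\bigr)^2$, so the optimisation should terminate in the scalar $\sqrt{1+(2\alpha+1)^2}-(2\alpha+1)$.

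The hard part is precisely the extraction of this exact constant. A crude rearrangement using only the pointwise cap \eqref{estimation} produces a strictly weaker constant than $C_\alpha$, so the sharp value must exploit the ODE structure — the monotonicity of $\theta_\alpha$ — rather than the amplitude bound in isolation. Moreover $\theta_\alpha$ blows up at both endpoints, so it cannot be frozen to a single global constant; the order-$\alpha$ endpoint contribution must be balanced against the central value $\alpha+\tfrac{1+q}{2}$, and it is this interplay that I anticipate will manufacture the factor $\sqrt{1+(2\alpha+1)^2}$. Finally, the two hypotheses serve exactly to license the comparison: $0\le\alpha\le\tfrac14$ and $0<q<\tfrac{3}{17}$ are precisely the conditions under which the preceding lemma guarantees that $\theta_\alpha\circ S$ is increasing, which is the one-sided monotonicity on which the entire lower bound rests.
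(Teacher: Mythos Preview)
Your variational identity
\[
\chi_n^{\alpha}(c)=\int_{-1}^1(1-x^2)\,|{\psi_{n,c}^{(\alpha)}}'|^2\,\omega_\alpha\,dx+c^2\!\int_{-1}^1 x^2\,|\psi_{n,c}^{(\alpha)}|^2\,\omega_\alpha\,dx
\]
is correct, but the decomposition does not do what you need. The first integral is $\langle \mathcal L_0\psi_{n,c}^{(\alpha)},\psi_{n,c}^{(\alpha)}\rangle=\sum_k k(k+2\alpha+1)|\beta_k^n|^2$, and since $\psi_{n,c}^{(\alpha)}$ has nonzero Gegenbauer components $\beta_k^n$ for $k<n$, there is no reason this quantity dominates $n(n+2\alpha+1)$. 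So even after you secure $M\ge C_\alpha$ you cannot add the two pieces to reach $n(n+2\alpha+1)+C_\alpha c^2$; you only get $\max\{n(n+2\alpha+1),\,C_\alpha c^2\}$. The paper avoids this obstacle entirely by differentiating the eigenvalue equation in $c$ and using self-adjointness to obtain
\[
\partial_c\chi_n^{\alpha}(c)=2c\!\int_{-1}^1 x^2|\psi_{n,c}^{(\alpha)}|^2\omega_\alpha\,dx=2cB,
\]
so that a lower bound $B\ge C_\alpha$ integrates from $c=0$ (where $\chi_n^{\alpha}(0)=n(n+2\alpha+1)$) to give exactly $n(n+2\alpha+1)+C_\alpha c^2$.

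For the bound $B\ge C_\alpha$ itself, your Pr\"ufer/Sturm comparison programme is aimed at the wrong target. The paper does not compare with a frozen-potential model at all; it produces a \emph{second} inequality linking $A^2$ and $B$, independent of \eqref{estimation}. Concretely, it introduces the auxiliary function
\[
K_n(t)=-(1-t^2)^{2\alpha+1}{\psi_{n,c}^{(\alpha)}}^2-\frac{(1-t^2)^{2\alpha+2}}{\chi_n^{\alpha}(c)(1-qt^2)}({\psi_{n,c}^{(\alpha)}}')^2,
\]
checks that $K_n'(t)\le 2(2\alpha+1)t\,\omega_\alpha(t)|\psi_{n,c}^{(\alpha)}(t)|^2$, and integrates over $[0,1]$ to get $A^2\le(2\alpha+1)B^{1/2}$ via Cauchy--Schwarz. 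The local estimate \eqref{estimation} is then used in the crude integrated form $1-B\le 2A^2$. Eliminating $A^2$ between these two gives $1-B\le 2(2\alpha+1)B^{1/2}$, i.e.\ $B+2(2\alpha+1)B^{1/2}-1\ge 0$, and this is precisely the quadratic in $X=B^{1/2}$ whose positive root is $\sqrt{1+(2\alpha+1)^2}-(2\alpha+1)$. So the completed-square structure you correctly spotted arises not from any oscillation comparison but from combining one energy-type inequality with one amplitude-type inequality.
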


\begin{proof}
	By differentiating with respect to $c$ the differential equation satisfied by GPSWFs , one gets
	$$ (1-x^2) \partial_c {{\psi_{n,c}^{(\alpha)}}'}'(x) - 2(\alpha+1)x \partial_c {\psi_{n,c}^{(\alpha)}}'(x)
	+ (\chi_n^{\alpha}(c)-c^2x^2) \partial_c \psi_{n,c}^{(\alpha)}(x) + \Big( \partial_c \chi_n^{\alpha}(c)-2cx^2 \Big) \psi_{n,c}^{(\alpha)}(x) = 0 $$
	Hence, we have
	$$ \Big(\mathcal{L}^{(\alpha)}_c + \chi_n^{\alpha}(c)Id\Big).\partial_c \psi_{n,c}^{(\alpha)} + \Big( \partial_c \chi_n^{\alpha}(c)-2cx^2 \Big) \psi_{n,c}^{(\alpha)}(x) = 0 $$
It's well known that $\mathcal{L}^{(\alpha)}_c $ is a self-adjoint operator and have $ \psi_{n,c}^{(\alpha)}$ as eigenfunctions . Hence, we have
	$$ < \Big(\mathcal{L}^{(\alpha)}_c + \chi_n^{\alpha}(c)Id\Big).\partial_c \psi_{n,c}^{(\alpha)}; \psi_{n,c}^{(\alpha)}>_{\omega_{\alpha}} = 0 $$
It follows that
	$$ \int_{-1}^{1} \Big( \partial_c \chi_n^{\alpha}(c)-2cx^2 \Big) {\psi_{n,c}^{(\alpha)}} ^{2}(x) dx = 0. $$
	From the fact that $ \norm{\psi_{n,c}^{(\alpha)}}_{L^2(I,\omega_\alpha)} = 1 $, one gets
	\begin{equation}\label{derivee}
	\partial_c \chi_n^{\alpha}(c) = 2c \int_{-1}^{1} x^2 {\psi_{n,c}^{(\alpha)}}^2(x) \omega_{\alpha}(x) dx
	\end{equation}
	As it is done in \cite{Bonami-Karoui1}, we denote by $ A= \Bigg[\psi_{n,c}^{(\alpha)}(0) + \frac{{\psi_{n,c}^{(\alpha)}}'(0)}{\chi_n^{\alpha}(c)} \Bigg]^{1/2} $ and consider the auxiliary function $$ K_n(t) = - (1-t^2)^{2\alpha+1} {\psi_{n,c}^{(\alpha)}}^2 - \frac{(1-t^2)^{2\alpha+2}}{\chi_n^{\alpha}(c)(1-qt^2)}({\psi_{n,c}^{(\alpha)}}')^2 $$
	Straightforward computations give us $$ K'_n(t) = 2(2\alpha+1)t(1-t^2)^{2\alpha} {\psi_{n,c}^{(\alpha)}}^2(t) - {H(t) {\psi_{n,c}^{(\alpha)}}'(t)}^2 $$ with $ H(t) \geq 0 $  for $t\in[0,1]$. Hence
	$$ K'_n(t)  \leq 2(2\alpha+1)t\omega_{\alpha}^2(t) {\psi_{n,c}^{(\alpha)}}^2(t) \leq 2(2\alpha+1)t\omega_{\alpha}(t) {\psi_{n,c}^{(\alpha)}}^2(t).  $$
	So one has the inequality
	\begin{equation}
	\label{Estimate0}
	K_n(1) - K_n(0) = A^2 \leq 2(2\alpha+1) \int_{0}^{1} t{|\psi_{n,c}^{(\alpha)}(t) |}^2 \omega_{\alpha}(t) dt \leq (2\alpha+1) \Bigg[\int_{-1}^{1}t^2 |\psi_{n,c}^{(\alpha)}(t) |^2 \omega_{\alpha}(t) dt \Bigg] ^{1/2}.
	\end{equation}
	That is
	\begin{equation}\label{11}
	A^2 \leq (2\alpha+1) B^{1/2}.
	\end{equation}
	Remark that \eqref{estimation} implies that
	\begin{equation} \label{12}
	1-B \leq 2 A^2
	\end{equation}
By combining \eqref{11} and \eqref{12} we conclude that $B^{1/2} $ is bounded below by the largest solution of the equation
	$ X^2 + 2(2\alpha+1) X -1 = 0 $
	
\end{proof}

\begin{remark} \label{rq}
By using \eqref{11} and since $B\leq1$, one concludes that the constant $A,$ given in \eqref{estimation} satisfies $A^2\leq 2\alpha+1.$
\end{remark}

\section{Approximation by the GPSWFs in Weighted Sobolev spaces}

In this section, we study the issue of the quality of spectral approximation of a function $f \in H^s_{\alpha}(I)$ by its truncated GPSWFs series expansion. \\
Note that a different spectral approximation result by the GPSWFs has been already given in \cite{Wang2}. It is important to mention here that the spectral approximation given in \cite{Wang2} is done in different approach. More precisely, by considering the weighted Sobolev space associated with the differential operator defined by
$$ \widetilde{H}^{r}_{\omega_{\alpha}}(I) = \{ f\in L^2(I,\omega_{\alpha}): \|f\|_{\widetilde{H}^{r}_{\omega_{\alpha}}(I)} = \|(\mathcal{L}^{(\alpha)}_c)^{r/2}.f\|^2 = \sum_{k=0}^{\infty}(\chi_n^{\alpha})^r |f_k|^2 <\infty \} $$
where $f_k$ are expansions coefficients of $f$ in the GPSWFs's basis. Then it has been shown that: \\
for any $f \in \widetilde{H}^{r}_{\omega_{\alpha}}(I) $ with $ r\geq 0 $
$$ \| S_{N}.f-f \|_{L^2(I,\omega_{\alpha})} \leq \Big( \chi^{(\alpha)}_{N+1} \Big)^{-r/2} \|f\|_{\widetilde{H}^{r}_{\omega_{\alpha}}(I)} $$
For more details, we refer the reader to \cite{Wang2}.\\
\begin{remark}
	We have the following norm's comparison given in \cite{Wang2} :
	$$ \|f\|_{\widetilde{H}^{r}_{\omega_{\alpha}}(I)} \leq
	C (1+c^2)^{r/2} \|f\|_{r,\omega} $$
	Note that for large value of $N$ , we have the same decay rate but when $N^2$ and $ 1+c^2$ are comparable we can notice clearly the importance of the following result.
	
\end{remark}
\begin{theorem}
	For $\alpha \geq 0 $ . For any real number $m \geq 0 $, $ f \in H^m_{\alpha} (I) $ and for any integer number $N \geq 1 $, we have
	\begin{equation} \label{approx1}
	\norm{f-S_Nf}^2_{L^2(I,\omega_\alpha)} \leq C \Bigg[ \sum_{n=N+1}^{\infty} \Big( \chi_{n}^{(\alpha)} \Big)^{-2m} \Bigg] \norm{f}_{m+2,\alpha}^2
	\end{equation}

\end{theorem}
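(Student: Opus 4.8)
The plan is to argue entirely in the orthonormal basis $\{\ps\}_{n\ge0}$ of $L^2(I,\omega_\alpha)$ and to reduce the statement to a tail bound on the coefficients $f_n=\langle f,\ps\rangle_{\omega_\alpha}$. Writing $f=\sum_{n\ge0}f_n\ps$ and, consistently with the summation range on the right-hand side of \eqref{approx1}, $S_Nf=\sum_{n=0}^{N}f_n\ps$, Parseval's identity gives $\norm{f-S_Nf}^2_{L^2(I,\omega_\alpha)}=\sum_{n=N+1}^{\infty}|f_n|^2$. Thus it suffices to show that each $|f_n|$ decays like a negative power of $\chi_n^{\alpha}(c)$ times a fixed weighted Sobolev norm of $f$, and then to sum over $n>N$.

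The mechanism for converting smoothness of $f$ into decay of $f_n$ is the Sturm--Liouville operator $\mathcal L_c^{(\alpha)}$, whose eigenfunctions are the $\ps$ with eigenvalues $\chi_n^{\alpha}(c)$. Because $\omega_\alpha(x)(1-x^2)=(1-x^2)^{\alpha+1}$ vanishes at $x=\pm1$ for $\alpha\ge0$, two integrations by parts show that $\mathcal L_c^{(\alpha)}$ is self-adjoint on $L^2(I,\omega_\alpha)$ with no boundary contribution; iterating this identity gives $\langle(\mathcal L_c^{(\alpha)})^m f,\ps\rangle_{\omega_\alpha}=(\chi_n^{\alpha}(c))^m f_n$ for every integer $m$, and for real $m\ge0$ we take this as the definition of the fractional power $(\mathcal L_c^{(\alpha)})^m$ through the spectral theorem applied to the positive self-adjoint operator $\mathcal L_c^{(\alpha)}$. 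Consequently $f_n=(\chi_n^{\alpha}(c))^{-m}\langle(\mathcal L_c^{(\alpha)})^m f,\ps\rangle_{\omega_\alpha}$, and Cauchy--Schwarz together with $\norm{\ps}_{\omega_\alpha}=1$ yields $|f_n|\le(\chi_n^{\alpha}(c))^{-m}\norm{(\mathcal L_c^{(\alpha)})^m f}_{\omega_\alpha}$. Squaring, summing over $n>N$ and pulling out the $f$-dependent factor gives
\[
\norm{f-S_Nf}^2_{L^2(I,\omega_\alpha)}\le\Big(\sum_{n=N+1}^{\infty}(\chi_n^{\alpha}(c))^{-2m}\Big)\,\norm{(\mathcal L_c^{(\alpha)})^m f}^2_{\omega_\alpha}.
\]

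What remains is to replace $\norm{(\mathcal L_c^{(\alpha)})^m f}_{\omega_\alpha}$ by a genuine weighted Sobolev norm of $f$, and this is the step I expect to be the main obstacle. The spectral representation identifies it with the operator-adapted norm of the preceding remark, $\norm{(\mathcal L_c^{(\alpha)})^m f}^2_{\omega_\alpha}=\sum_n(\chi_n^{\alpha}(c))^{2m}|f_n|^2=\norm{f}^2_{\widetilde H^{2m}_{\omega_\alpha}(I)}$, so the comparison quoted there, namely $\norm{f}_{\widetilde H^{r}_{\omega_\alpha}(I)}\le C(1+c^2)^{r/2}\norm{f}_{r,\omega}$ with $r=2m$, controls it by a constant (for fixed $c$) times $\norm{f}_{2m,\alpha}$. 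For integer $m$ this continuity estimate follows from the explicit second-order form of $\mathcal L_c^{(\alpha)}$ by bounding the $2m$ weighted derivatives that appear; for non-integer $m$ it must be obtained by interpolation between consecutive integer orders, as in \cite{Bergh-Lofstrom} and \cite{Bernardi-Maday}, and the degeneracy of the coefficient $(1-x^2)^{\alpha+1}$ at the endpoints has to be tracked so that the relevant derivatives stay controlled in the weight $\omega_\alpha$. The Sobolev index $m+2$ written in \eqref{approx1} is then recovered from the index $2m$ produced by this route through the monotonicity $\norm{f}_{2m,\alpha}\le\norm{f}_{m+2,\alpha}$ valid in the pertinent range $m\le2$; pinning down this reconciliation, and in particular the sharp index, is the delicate point of the whole argument. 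Finally $\sum_{n>N}(\chi_n^{\alpha}(c))^{-2m}$ converges exactly when $m>1/4$, since $\chi_n^{\alpha}(c)\ge n(n+2\alpha+1)$ by \eqref{boundschi}, the inequality being vacuous otherwise; collecting all $\alpha$- and $c$-dependent constants into $C$ then gives \eqref{approx1}.
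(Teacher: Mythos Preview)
Your proposal follows essentially the same route as the paper: expand $f$ in the GPSWF basis, use the self-adjointness of $\mathcal L_c^{(\alpha)}$ to write $\hat f_n=(\chi_n^{\alpha}(c))^{-m}\langle(\mathcal L_c^{(\alpha)})^m f,\ps\rangle_{\omega_\alpha}$, apply Cauchy--Schwarz, and then control $\norm{(\mathcal L_c^{(\alpha)})^m f}_{\omega_\alpha}$ by a weighted Sobolev norm of $f$, finishing by interpolation for non-integer orders. The only variation is in how the last bound is obtained: the paper computes the successive $x$-derivatives of $\mathcal L_c^{(\alpha)} f$ explicitly to show that $\mathcal L_c^{(\alpha)}$ maps $H^{r+2}_{\alpha}$ continuously into $H^{r}_{\alpha}$ and then iterates, whereas you identify $\norm{(\mathcal L_c^{(\alpha)})^m f}_{\omega_\alpha}$ with the operator-adapted norm $\norm{f}_{\widetilde H^{2m}_{\omega_\alpha}}$ and invoke the comparison from \cite{Wang2}. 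Both executions naturally produce the Sobolev index $2m$ rather than $m+2$; the reconciliation you flag as delicate is precisely the same index discrepancy visible in the paper's own induction step, where iterating the single-step bound should yield $\norm{(\mathcal L_c^{(\alpha)})^r f}_{\omega_\alpha}\le C\norm{f}_{2r,\alpha}$ rather than the stated $\norm{f}_{r+2,\alpha}$.
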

\begin{proof}
We recall here the expression of the differential operator $ \mathcal{L}_c^{(\alpha)}$  :
$$
\mathcal L_c^{(\alpha)} (f)(x)= -\frac{1}{\omega_{\alpha}(x)} \frac{d}{dx}\left[ \omega_{\alpha}(x) (1-x^2) f'(x)\right] +c^2 x^2  f(x) = -(1-x^2)f''(x) + 2(\alpha+1)xf'(x) + c^2x^2f(x) .$$
For any positive integer $r$ and function $ f\in H^r_{\alpha,0}$ , the derivatives of $ \mathcal{L}_c^{(\alpha)}.f$ are given by :

\begin{eqnarray}
\frac{d^r}{dx^r}[\mathcal{L}_c^{(\alpha)}.f](x) &=& -(1-x^2) \frac{d^{r+2}f}{dx^{r+2}}(x) + 2(r+\alpha+1)x\frac{d^{r+1}f}{dx^{r+1}}(x) + \Big[ r(2\alpha + r +1)+c^2 x^2 \Big] \frac{d^{r}f}{dx^{r}}(x) \nonumber \\
&+& 2 c^2rx \frac{d^{r-1}f}{dx^{r-1}}(x) + c^2  r(r-1) \frac{d^{r-2}f}{dx^{r-2}}(x)
\end{eqnarray}
This expression yields directly that :
\begin{eqnarray}
\norm{\mathcal{L}_c^{(\alpha)}.f}^2_{\alpha,r} &=& \sum_{m=0}^r \norm{\frac{d^r}{dx^r}[\mathcal{L}_c^{(\alpha)}.f]}^2_{L^2(I,\omega_\alpha)} \nonumber \\
& \leq C & \norm{f}^2_{\alpha,r+2}
\end{eqnarray}
Therefore $\mathcal{L}_c^{(\alpha)}$ is a continuous operator from $ H^{r+2}_{\alpha,0}(I)$ into $ H^r_{\alpha,0}(I)$. \\
Then, by induction on $ r $, we obtain
\begin{equation}
\norm{\Big[\mathcal{L}_c^{(\alpha)}\Big]^r.f}_{\alpha,s} \leq C \norm{f}_{\alpha,r+2}
\end{equation}
For $ f \in H^r_{\alpha}(I) $, we write :
$$ f = \sum_{n=0}^{\infty} \hat{f_n}\psi^{(\alpha)}_{n,c} \quad \mbox{with} \quad
\hat{f_n} = \int_{-1}^1 f(x) \psi^{(\alpha)}_{n,c}(x) \omega_{\alpha}(x) $$
Since $\mathcal{L}_c^{(\alpha)}$ is self adjoint on $L^2(I,\omega_{\alpha}) $,
\begin{eqnarray}
\hat{f_n} &=& \frac{1}{\chi_{n,\alpha}}\int_{-1}^1 \mathcal{L}_c^{(\alpha)}.\ps(x) f(x) \omega_{\alpha}(x) dx =  \frac{1}{\chi_{n,\alpha}}\int_{-1}^1 \mathcal{L}_c^{(\alpha)}.f(x) \ps(x)  \omega_{\alpha}(x) dx \nonumber \\
&=& ... = \frac{1}{(\chi_{n,\alpha})^r}\int_{-1}^1 \Big[\mathcal{L}_c^{(\alpha)}\Big]^r.f(x) \ps(x)  \omega_{\alpha}(x) dx
\end{eqnarray}
Thus
\begin{eqnarray}
|\hat{f_n}|^2 &=& \frac{1}{\Big(\chi_{n}^{(\alpha)}\Big)^{2r}}|\Big<\Big[\mathcal{L}_c^{(\alpha)}\Big]^r.f,\ps\Big>_{L^2(I,\omega_{\alpha})}|^2 \leq \frac{1}{\Big(\chi_{n}^{(\alpha)}\Big)^{2r}} \norm{\Big[\mathcal{L}_c^{(\alpha)}\Big]^rf}_{L^2(I,\omega_{\alpha})} \nonumber \\
&\leq & \frac{C}{\Big(\chi_{n}^{(\alpha)}\Big)^{2r}} \norm{f}^2_{r+2,\alpha}
\end{eqnarray}
By writing $ \norm{f-S_Nf}^2_{L^2(I,\omega_\alpha)} = \displaystyle \sum_{n=N+1}^{\infty} |\hat{f_n}|^2 $ one gets \eqref{approx1}
Then, we obtain the result for $u$ regular, then for $ u \in H^{r+2}_{\alpha} (I)$ by density and continuity. Finally, for $s$ between two even integers, we conclude by interpolation.
	\end{proof}
In the following, we recall from \cite{Karoui-Souabni1} that for $c > 0$ a fixed positive real number, then, for all positive integers
$n,\; k$ such that $q=c^2/\chi^{\alpha}_{n}(c) <1$ and $ k(k+2\alpha+1)+C'_{\alpha}c^2\leq \chi_n^{\alpha}(c)$ , we have
\begin{eqnarray}
% \nonumber to remove numbering (before each equation)
|\beta_k^n|&\leq& C_{\alpha}\left(\frac{2\sqrt{\chi_{n}^{\alpha}(c)}}{c} \right)^k |\mu_{n}^{\alpha}(c)|.\label{9}
\end{eqnarray}
With $C'_{\alpha}$ a constant depends only on $\alpha$, and $ {\displaystyle C_{\alpha}=\frac{2^{\alpha}(3/2)^{3/4}(3/2+2\alpha)^{3/4+\alpha}}{e^{2\alpha+3/2}}}$
\begin{lemma}\label{decaybeta}
	Let $c>1$ and $\alpha  >0$, then  for all positive integers $n,\; k$ such that, $k\leq n/1.9$ and
	$n\geq m_{\alpha}c$ with $m_{\alpha}=4.13\left(1.28+\frac{2\alpha+1}{1.9}\right)^{0.55}$
	we have
	\begin{eqnarray}
	% \nonumber to remove numbering (before each equation)
	|\beta_k^n| &\leq & C_{c,\alpha}e^{-\delta n}\label{10}
	\end{eqnarray}
	with $\delta$ and $C_{c,\alpha}$ positive  constants .
\end{lemma}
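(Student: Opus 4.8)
The plan is to feed the single-coefficient bound \eqref{9} the two ingredients already at our disposal: the super-exponential decay of $|\mu_n^{(\alpha)}(c)|$ from Theorem~\ref{decaylambda} and the elementary upper bound $\chi_n^{\alpha}(c)\leq n(n+2\alpha+1)+c^2$ from \eqref{boundschi}. Before invoking \eqref{9}, I would verify that its two hypotheses hold throughout the admissible range. Since $\chi_n^{\alpha}(c)\geq n(n+2\alpha+1)\geq n^2\geq (m_{\alpha}c)^2$, the ratio $q=c^2/\chi_n^{\alpha}(c)$ is $<1$; and since $k\leq n/1.9$ gives $k(k+2\alpha+1)\leq \frac{n}{1.9}\big(\frac{n}{1.9}+2\alpha+1\big)$, which is strictly dominated by $n(n+2\alpha+1)$ once $n\geq m_{\alpha}c$, the admissibility condition $k(k+2\alpha+1)+C'_{\alpha}c^2\leq\chi_n^{\alpha}(c)$ is also met (one of the places where the threshold $m_{\alpha}$ is forced). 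Finally $n\geq m_{\alpha}c>\tfrac{ec+1}{2}$ for $c>1$, so Theorem~\ref{decaylambda} applies and $\log\!\big(\tfrac{2n-1}{ec}\big)>0$.

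Inserting \eqref{Eq3.1} into \eqref{9} then gives, for every $k\leq n/1.9$,
\begin{equation*}
|\beta_k^n|\leq \frac{C_{\alpha}k_{\alpha}}{c^{\frac{\alpha}{2}+1}\log\!\big(\frac{2n-1}{ec}\big)}\left(\frac{2\sqrt{\chi_n^{\alpha}(c)}}{c}\right)^{k}\left(\frac{ec}{2n-1}\right)^{n+\frac{\alpha}{2}}.
\end{equation*}
For $c>1$ and $n\geq m_{\alpha}c$ the prefactor is bounded by a constant $C_{c,\alpha}$ (the logarithm is bounded below by a positive constant on this range, and $c^{-\alpha/2-1}\leq 1$), so it can be absorbed into the final constant. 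Because $\sqrt{\chi_n^{\alpha}(c)}$ is large, the factor $\big(2\sqrt{\chi_n^{\alpha}(c)}/c\big)^{k}$ increases in $k$; hence it suffices to treat the worst case $k=n/1.9$, and the bound obtained there holds uniformly for all $k\leq n/1.9$.

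It then remains to show that the remaining power product decays geometrically. Discarding the harmless factor $(ec/(2n-1))^{\alpha/2}\leq 1$ and grouping the $n$-th powers, I would write the core as $B_n^{\,n}$ with base
\begin{equation*}
B_n=\frac{ec}{2n-1}\left(\frac{2\sqrt{\chi_n^{\alpha}(c)}}{c}\right)^{1/1.9},
\end{equation*}
so that the problem reduces to the scalar inequality $B_n\leq e^{-\delta}$. Using $\sqrt{\chi_n^{\alpha}(c)}\leq\sqrt{n(n+2\alpha+1)+c^2}\leq n+\frac{2\alpha+1}{2}+O(c^2/n)$ and setting $t=n/c$, the base $B_n$ becomes a function of $t$ alone up to controlled corrections that use $c>1$. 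Its leading behaviour is $B_n\asymp e\,(2t)^{1/1.9-1}=e\,(2t)^{-0.9/1.9}$, which is $<1$ exactly when $2t>e^{1.9/0.9}\approx 8.26$, i.e. $t>4.13$; this is the origin of the constant $4.13$ in $m_{\alpha}$. Keeping the $\alpha$-shift $\frac{2\alpha+1}{2}$ inside $\sqrt{\chi_n^{\alpha}(c)}$ and solving the sharpened inequality for $t$ produces the $\alpha$-dependent correction factor $\big(1.28+\frac{2\alpha+1}{1.9}\big)^{0.55}$, whence $n\geq m_{\alpha}c$ guarantees $B_n\leq e^{-\delta}$ with $\delta>0$ (independent of $c$). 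Monotonicity of $B_n$ in $n$ makes $n=m_{\alpha}c$ the worst case, so the bound is uniform on the whole range, and combining with the absorbed prefactor yields \eqref{10}.

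The main obstacle is precisely this last scalar estimate: turning the heuristic ``$B_n<1$'' into the explicit, $\alpha$-uniform threshold $m_{\alpha}=4.13\big(1.28+\frac{2\alpha+1}{1.9}\big)^{0.55}$ rather than a merely asymptotic statement. This forces one to track carefully the lower-order terms — the shift $\frac{2\alpha+1}{2}$ under the square root, the residual $c^2/n$, and the $-1$ in $2n-1$ — and to use $c>1$ to decouple them from $c$, while choosing $\delta$ small enough that the single inequality $B_n\leq e^{-\delta}$ survives simultaneously for all $k\leq n/1.9$ and all $n\geq m_{\alpha}c$.
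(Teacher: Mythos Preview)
Your proposal is correct and follows essentially the same route as the paper: combine \eqref{9} with the bound \eqref{Eq3.1} on $|\mu_n^{(\alpha)}(c)|$ and the upper bound $\chi_n^{\alpha}(c)\leq n(n+2\alpha+1)+c^2$, take the worst case $k=n/1.9$, and reduce to the scalar inequality $B_n\leq e^{-\delta}$ whose explicit solution produces the threshold $m_{\alpha}=4.13\big(1.28+\frac{2\alpha+1}{1.9}\big)^{0.55}$. The paper carries out exactly this algebra (writing everything to the power $n/A$ with $A=1.9$, then rebalancing to the power $n(A-1)/A$), so the only difference is that you describe the final scalar step heuristically where the paper grinds it out.
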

\begin{proof}
By $(\ref{9})$ ,$(\ref{Eq3.1})$  , the inequality   $\chi_{n}^{\alpha}(c)\leq n(n+2\alpha+1)+c^2$ and  the previous remark,
one concludes that for $c>0$ and $\alpha  >-1$, and  for all positive integers $n,\; k$ such that, $k\leq n/A$ and $n\geq Ac$ with $A\geq 1.9$, we have,
\begin{eqnarray}
% \nonumber to remove numbering (before each equation)
|\beta_k^n|  &\leq &C_{\alpha}\left(\frac{2\sqrt{\chi_{n}^{\alpha}(c)}}{c} \right)^k |\mu_{n}^{\alpha}(c)|\nonumber\\
	&\leq &\frac{C_{\alpha}}{c^{\frac{({\alpha+2})}{2}}\log(\frac{2n-1}{ec})}
	\left(\frac{2}{c}\sqrt{\left({n(n+2\alpha+1)+c^2}\right)} \right)^{{k}}\left(\frac{ec}{2n-1}\right)^{n+\frac{\alpha}{2}} \nonumber\\
	&\leq &\frac{C_{\alpha}\left(\frac{ec}{2n-1}\right)^{\frac{\alpha}{2}}}{c^{\frac{({\alpha+2})}{2}}\log(\frac{2n-1}{ec})}
	\left(\frac{2}{c}{\sqrt{\left(n(n+2\alpha+1)+\frac{n^2}{A^2}\right)}} \right)^{\frac{n}{A}}\left(\frac{ec}{2n-1}\right)^{n}\nonumber\\
&\leq &\frac{C_{\alpha}\left(\frac{ec}{2n-1}\right)^{\frac{\alpha}{2}}}{c^{\frac{({\alpha+2})}{2}}\log(\frac{2n-1}{ec})}
	\left(\frac{2}{c}n\sqrt{\left(1+\frac{1}{A^2}+\frac{2\alpha}{n}\right)} \right)^{\frac{n}{A}}\left(\frac{(ec)^A}{(2n-1)^A}\right)^{\frac{n}{A}}\nonumber\\
&\leq &\frac{C_{\alpha}\left(\frac{ec}{2n-1}\right)^{\frac{\alpha}{2}}}{c^{\frac{({\alpha+2})}{2}}\log(\frac{2n-1}{ec})}
	\left(\frac{2}{c}n\sqrt{1+\frac{1}{A^2}+\frac{2\alpha+1}{n}}\frac{(ec)^A}{(2n-1)^A}
\right)^{\frac{n}{A}}\nonumber\\
&\leq &\frac{C_{\alpha}\left(\frac{ec}{2n-1}\right)^{\frac{\alpha}{2}}}{c^{\frac{({\alpha+2})}{2}}\log(\frac{2n-1}{ec})}
	\left(2e^Ac^{A-1}\left(\frac{n}{2n-1}\right)^A\sqrt{1+\frac{1}{A^2}+\frac{2\alpha+1}{n}}n^{1-A}
\right)^{\frac{n}{A}}.
	\end{eqnarray}
For the appropriate value of $A=1,9$. We have
\begin{eqnarray}
% \nonumber to remove numbering (before each equation)
  |\beta_k^n| &\leq &\frac{C_{\alpha}\left(\frac{ec}{2n-1}\right)^{\frac{\alpha}{2}}}{c^{\frac{({\alpha+2})}{2}}\log(\frac{2n-1}{ec})}
	\left(\frac{1}{2}e^{\frac{A}{A-1}}\left(1+\frac{1}{A^2}+\frac{2\alpha+1}{A}\right)^{\frac{1}{2(A-1)}}\frac{c}{n}
\right)^{\frac{n(A-1)}{A}}\nonumber\\
&\leq &\frac{C_{\alpha}\left(\frac{ec}{2n-1}\right)^{\frac{\alpha}{2}}}{c^{\frac{({\alpha+2})}{2}}\log(\frac{2n-1}{ec})}
	\left(4.13\left(1.28+\frac{2\alpha+1}{1.9}\right)^{0.55}\frac{c}{n}
\right)^{0.47n}.
\end{eqnarray}
Since for all $n\geq m_{\alpha}c$, with $m_{\alpha}=4.13\left(1.28+\frac{2\alpha+1}{1.9}\right)^{0.55}$ there  are two positive  constant $\delta>0$ and
$C_{c,\alpha}\geq \frac{C_{\alpha}\left(\frac{ec}{2n-1}\right)^{\frac{\alpha}{2}}}{c^{\frac{({\alpha+2})}{2}}\log(\frac{2n-1}{ec})}$.
Then we have  \eqref{10}
\end{proof}
In \cite{Nicaise} the author has shown that on $H^s_{\alpha,0}$ we can also use the norm
$$ \norm{v}_{\alpha,s} = \sum_{n\in \N}\Big(1+(n(n+2\alpha+1))^{2s}\Big) |\hat{v}_{n,\alpha} |^2 \quad \mbox{where} \quad v = \sum_{n=0}^{\infty} \hat{v}_{n,\alpha}\widetilde{P}_n^{(\alpha,\alpha)} $$.

\begin{theorem}
		Let $c>0$ and $\alpha  >0 $, then there exist generic constants $K>0$ such that, when
		$N >  m_{\alpha}c$ with $m_{\alpha}=4.13\left(1.28+\frac{2\alpha+1}{1.9}\right)^{0.55}$    and $f \in H^s_{\alpha}$, we have the inequality
		\begin{eqnarray} \label{ineq}
		% \nonumber to remove numbering (before each equation)
		\|f-S_N(f)\|_{L^2_{\alpha}(I)} &\leq& K\left(1+ \left(\frac{N}{2}\right)^2\right)^{\frac{-s}{2}} \|f\|_{H^s_{\alpha}} +K.e^{-\delta N}\|f\|_{L^2(I,\omega_{\alpha})}
		\end{eqnarray}		
\end{theorem}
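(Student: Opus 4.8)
My plan is to reduce everything to a tail of the GPSWF coefficients and then split that tail according to a single threshold dictated by Lemma~\ref{decaybeta}. Since $\{\ps\}_{n\ge 0}$ is an orthonormal basis of $L^2(I,\omega_{\alpha})$, writing $f=\sum_{n\ge 0}a_n\,\ps$ with $a_n=\langle f,\ps\rangle_{\omega}$ gives immediately
\[
\norm{f-S_N f}_{L^2(I,\omega_{\alpha})}^2=\sum_{n\ge N}|a_n|^2 .
\]
I would then fix $M=\lfloor N/1.9\rfloor$ and split $f=f_{\mathrm{low}}+f_{\mathrm{high}}$ in the Jacobi basis, with $f_{\mathrm{low}}=\sum_{k\le M}\hat f_k\,\wJ_k$, $f_{\mathrm{high}}=\sum_{k>M}\hat f_k\,\wJ_k$ and $\hat f_k=\langle f,\wJ_k\rangle_{\omega}$. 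Using $|a_n|^2\le 2|\langle f_{\mathrm{low}},\ps\rangle|^2+2|\langle f_{\mathrm{high}},\ps\rangle|^2$ the tail breaks into a low and a high contribution. The choice $M=\lfloor N/1.9\rfloor$ is made precisely so that for every $n\ge N$ and every $k\le M$ one has $k\le M\le N/1.9\le n/1.9$, i.e. exactly the regime covered by Lemma~\ref{decaybeta}, while simultaneously $M\asymp N$.

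For the high contribution the crucial move is to avoid any termwise estimate of $\langle f_{\mathrm{high}},\ps\rangle$ and instead invoke Bessel's inequality for the orthonormal system $\{\ps\}$:
\[
\sum_{n\ge N}|\langle f_{\mathrm{high}},\ps\rangle|^2\le\sum_{n\ge 0}|\langle f_{\mathrm{high}},\ps\rangle|^2=\norm{f_{\mathrm{high}}}_{\omega}^2=\sum_{k>M}|\hat f_k|^2 .
\]
It then remains to dominate this Jacobi tail by the Sobolev norm. Using the equivalent norm on $H^s_{\alpha}$ recalled after Lemma~\ref{decaybeta}, whose weights are the powers of the Gegenbauer eigenvalues $k(k+2\alpha+1)$, one gets $\sum_{k>M}|\hat f_k|^2\le\big(1+M(M+2\alpha+1)\big)^{-s}\norm{f}_{H^s_{\alpha}}^2$, and the elementary comparison $1+(N/2)^2\le C\,\big(1+M(M+2\alpha+1)\big)$ (valid since $M\asymp N$) turns this into $K^2\big(1+(N/2)^2\big)^{-s}\norm{f}_{H^s_{\alpha}}^2$, which is exactly the square of the first term of \eqref{ineq}.

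For the low contribution I would apply Lemma~\ref{decaybeta}. Because $k\le M\le n/1.9$ and $n\ge N>m_{\alpha}c$, we have $|\beta_k^n|=|\langle\wJ_k,\ps\rangle|\le C_{c,\alpha}e^{-\delta n}$, whence by Cauchy--Schwarz
\[
|\langle f_{\mathrm{low}},\ps\rangle|\le C_{c,\alpha}e^{-\delta n}\sum_{k\le M}|\hat f_k|\le C_{c,\alpha}\sqrt{M+1}\,e^{-\delta n}\norm{f}_{L^2(I,\omega_{\alpha})} .
\]
Summing the geometric series $\sum_{n\ge N}e^{-2\delta n}$ and absorbing the harmless factor $M+1\asymp N$ into a marginally smaller exponent $\delta'<\delta$ yields $\sum_{n\ge N}|\langle f_{\mathrm{low}},\ps\rangle|^2\le K^2 e^{-2\delta' N}\norm{f}_{L^2(I,\omega_{\alpha})}^2$, the square of the second term of \eqref{ineq}.

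Adding the two contributions, taking square roots and renaming $\delta'$ as $\delta$ (and enlarging $K$) gives the announced inequality for integer $s$; the case of arbitrary real $s\ge 0$ follows either directly from the spectral characterization of $\norm{\cdot}_{H^s_{\alpha}}$ or, as in the previous proof, by interpolation between consecutive integers. The delicate point --- and the reason the estimate comes out at the sharp rate $\big(1+(N/2)^2\big)^{-s/2}$ rather than a lossy $N^{1/2-s}$ --- is the high-part step: one must bound $\langle f_{\mathrm{high}},\ps\rangle$ through Bessel's inequality (the isometry of the change of basis), not coefficient by coefficient, since a termwise Cauchy--Schwarz would introduce an extra counting factor $\#\{n:N\le n<1.9k\}$ and cost a full power of $N^{1/2}$. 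Everything else is a routine matching of the threshold $M$ with the hypotheses of Lemma~\ref{decaybeta}.
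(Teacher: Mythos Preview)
Your proof is correct and follows essentially the same strategy as the paper: split $f$ into low and high Jacobi modes at a threshold comparable to $N/1.9$, control the high part through the Jacobi--spectral characterization of $\norm{\cdot}_{H^s_{\alpha}}$ (equivalently, Bessel's inequality plus the tail bound $\sum_{k>M}|\hat f_k|^2\le (1+M(M+2\alpha+1))^{-s}\norm{f}_{H^s_{\alpha}}^2$), and control the low part via Lemma~\ref{decaybeta}. Your write-up is in fact more careful than the paper's in making the threshold $M=\lfloor N/1.9\rfloor$ explicit and in handling the Cauchy--Schwarz step for $\langle f_{\mathrm{low}},\ps\rangle$, where the paper's displayed inequality $|\langle g,\ps\rangle|^2\le\sum_{k<N_0}|\hat v_k|^2|\beta_k^n|^2$ is stated without the needed counting factor.
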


\begin{proof}
Let $ f \in H^s_{\alpha,0}$, then
$$ f = \sum_{k \in \N} \hat{v}_{n,\alpha}\widetilde{P}_n^{(\alpha,\alpha)} = \sum_{k<N_0} \hat{v}_{n,\alpha}\widetilde{P}_n^{(\alpha,\alpha)} + \sum_{k>N_0}\hat{v}_{n,\alpha}\widetilde{P}_n^{(\alpha,\alpha)} = g+h .$$
We have directly that
\begin{equation}
\norm{h}_{L^2(I,\omega_{\alpha})}^2  \leq C (1+(\frac{N}{2})^2)^{-s}\|f\|_{H^S_{\alpha}}
\end{equation}
Moreover, we can write the function $g$ under the form
$
g = \displaystyle \sum_{n \in \N} <g,\ps>_{L^2(I,\omega_{\alpha})} \ps
$

\begin{eqnarray}
\norm{g-S_N.g}_{L^2(I,\omega_{\alpha})}^2 &=& \sum_{n \geq N+1} |<g,\ps>_{L^2(I,\omega_{\alpha})}|^2 \nonumber \\
&\leq& \sum_{n\geq N+1} \sum_{k<N_0} |\hat{v}_k|^2 \Big|<P^{(\alpha,\alpha)}_k,\ps>_{L^2(I,\omega_{\alpha})}\Big|^2 \nonumber \\
&=& \sum_{n\geq N+1} \sum_{k<N_0} |\hat{v}_k|^2 |\beta_k^n|^2 \nonumber \\
& \leq & C e^{-2 \delta N} \norm{g}_{L^2(I,\omega_{\alpha})}
\end{eqnarray}
The last inequality use lemma \ref{decaybeta}.
Then we prove this result for functions $ f \in H^s_{\alpha,0} $ and we extend it into $ H^s_{\alpha} $ by density.
\end{proof}

\begin{remark}
	The previous proposition is considered as the generalization of a similar result given in \cite{Bonami-Karoui4}, in the special case $ \alpha = 0 $
\end{remark}
In \cite{Bonami-Karoui4}, authors studied the spectral approximation of a function $f \in H^s_{per}(I)$. Where $H^s_{per}$ is the subspace of $H^s_{0}(I)$ that extend into 2-periodic functions of the same regularity. Recall that in this space we define the norm :
$$ \norm{f}_{H^s_{per}}^2 = \sum_{k\in \Z} \Big( (1+k\pi^2) \Big)^s |b_k(f)|^2 $$
With $ \displaystyle b_k(f) = \frac{1}{\sqrt{2}} \int_{-1}^{1} f(x) e^{-i\pi k x} dx $ is the coefficient of the Fourier series expansion of $f$. We further prove in the following proposition that the family of GPSWFs are also adapted for the approximation of functions in $f \in H^s_{per}$.

\begin{lemma} \label{decaycoeff}
	
	for $c\geq1$, $\alpha>0$,  for all positif real $n,k$ such that $k\leq 0.14n$ and $n\geq m_{\alpha}c$ with $m_{\alpha}=4.13\left(1.28+\frac{2\alpha+1}{1.9}\right)^{0.55}$.
	There exist $C_{\alpha,c},\delta>0$ such that
	\begin{eqnarray}
	% \nonumber to remove numbering (before each equation)
	|\langle e^{ik\pi x}, \psi_{n,c}^{\alpha}\rangle _{L^2(I,\omega_{\alpha})}| &\leq& C_{\alpha,c} e^{-\delta n}
	\end{eqnarray}
	
\end{lemma}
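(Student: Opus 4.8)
The plan is to pass to the Jacobi expansion \eqref{expansion1} of the GPSWF and reduce the estimate to the decay of two sequences: the connection coefficients $\beta_j^n$ of $\ps$, already controlled by Lemma \ref{decaybeta}, and the ``Fourier--Jacobi'' coefficients of the exponential. Writing $\ps=\sum_{j\ge 0}\beta_j^n\wJ_j$ and using that the GPSWFs may be taken real, I would first record
$$
\langle e^{ik\pi x},\ps\rangle_{L^2(I,\omega_\alpha)}=\sum_{j\ge 0}\beta_j^n\,\gamma_j^k,\qquad
\gamma_j^k:=\langle e^{ik\pi x},\wJ_j\rangle_{L^2(I,\omega_\alpha)}=\mathcal F^{(\alpha)}_{k\pi}\wJ_j(1).
$$
The last identity is immediate from the definition of $\mathcal F^{(\alpha)}_{c}$ with frequency $c=k\pi$ evaluated at $x=1$, and by \eqref{Eq3.2} each $\gamma_j^k$ is an explicit multiple (the factor $\tfrac1{\sqrt{h_j}}(2/(k\pi))^{\alpha+1/2}\Gamma(j+\alpha+1)/\Gamma(j+1)$) of the Bessel value $J_{j+\alpha+1/2}(k\pi)$. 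Parseval gives $\sum_j|\gamma_j^k|^2=\|e^{ik\pi\cdot}\|^2_{L^2(I,\omega_\alpha)}=B(\tfrac12,\alpha+1)$ and $\sum_j|\beta_j^n|^2=1$, so both sequences are square summable, which legitimizes the splitting below.

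Next I would split the sum at $j_0=n/1.9$. On the low range $j\le j_0$, Lemma \ref{decaybeta} yields $|\beta_j^n|\le C_{c,\alpha}e^{-\delta n}$, while Cauchy--Schwarz together with Parseval gives $\sum_{j\le j_0}|\gamma_j^k|\le (j_0+1)^{1/2}\,\|e^{ik\pi\cdot}\|_{L^2(I,\omega_\alpha)}$; hence the low part is at most $C_{c,\alpha}e^{-\delta n}\sqrt{n/1.9+1}\,\sqrt{B(\tfrac12,\alpha+1)}$, and the harmless $\sqrt n$ is absorbed into a slightly smaller rate. On the high range $j>j_0$ I would instead use $|\beta_j^n|\le 1$ and Cauchy--Schwarz in the form
$$
\sum_{j>j_0}|\beta_j^n|\,|\gamma_j^k|\le\Big(\sum_{j>j_0}|\gamma_j^k|^2\Big)^{1/2},
$$
so that everything reduces to showing that the tail $\sum_{j>j_0}|\gamma_j^k|^2$ is exponentially small in $n$.

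The crux, and the main obstacle, is precisely this decay of $\gamma_j^k$ for $j>j_0$, and it is where the threshold $k\le 0.14\,n$ enters: it gives $k\pi\le 0.14\pi\,n<n/1.9=j_0$, so for every $j>j_0$ the order $j+\alpha+1/2$ of the Bessel function strictly exceeds its argument $k\pi$, placing $J_{j+\alpha+1/2}(k\pi)$ in its exponentially decaying regime. I want to stress that the crude polynomial bound \eqref{1} used in Theorem \ref{decaylambda} is \emph{not} enough here: inserting \eqref{1} into $\gamma_j^k$ only produces geometric decay once $2j+1>ek\pi$, i.e. for $j\gtrsim 0.6\,n$, which leaves the band $j_0<j<0.6\,n$ uncontrolled (in Theorem \ref{decaylambda} the frequency $c$ was small relative to the degrees $j\ge n$, whereas here $k\pi$ is comparable to $j_0$). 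To close this band I would invoke the sharp uniform estimate $|J_\nu(\nu\,\mathrm{sech}\,\beta)|\le C\,\nu^{-1/2}e^{-\nu(\beta-\tanh\beta)}$, valid for $0<\mathrm{sech}\,\beta<1$. On the relevant range one has $\mathrm{sech}\,\beta=k\pi/(j+\alpha+1/2)\le 0.837$ for $j\ge j_0$, so $\beta-\tanh\beta$ is bounded below by a positive constant; after accounting for the sub-exponential normalization factors this gives $|\gamma_j^k|\le C_{\alpha,k}e^{-\delta' j}$.

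Finally I would sum the resulting geometric tail, obtaining $\big(\sum_{j>j_0}|\gamma_j^k|^2\big)^{1/2}\le C_{\alpha,c}e^{-\delta'' n}$, and combine it with the low-range bound. Taking $\delta>0$ to be the smaller of the two rates and $C_{\alpha,c}$ the larger of the two constants yields $|\langle e^{ik\pi x},\ps\rangle_{L^2(I,\omega_\alpha)}|\le C_{\alpha,c}e^{-\delta n}$, which is the assertion. The only genuinely new ingredient beyond what the paper already develops is the sharp Bessel decay used in the third step; everything else follows from Lemma \ref{decaybeta}, Parseval's identity, and the explicit Fourier--Jacobi formula \eqref{Eq3.2}.
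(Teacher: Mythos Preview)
Your proof follows the same skeleton as the paper's: pass to the Jacobi expansion of $\ps$, split the sum at $j_0\approx n/1.9$, invoke Lemma~\ref{decaybeta} on the low range, and bound the Fourier--Jacobi coefficients $\gamma_j^k=\langle e^{ik\pi x},\wJ_j\rangle$ on the high range. The one substantive difference is the tool you use for the high range. The paper relies on the crude bound~\eqref{1} (essentially the computation leading to~\eqref{24} with $c$ replaced by $k\pi$), obtaining $|\gamma_m^k|\lesssim (k\pi e/(2m+1))^{m}$ and then summing. Your observation that this leaves a band uncontrolled is well taken: with $k\le 0.14n$ one has $k\pi e\approx 1.196\,n$ while $2j_0+1\approx 1.05\,n$, so the ratio exceeds $1$ for $n/1.9<j\lesssim 0.6\,n$, and the paper's passage from the sum to a single geometric term (together with the unexplained appearance of $A=1.7$ in the displayed bound for $I_2$) glosses over exactly this. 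Your Debye-type estimate $|J_\nu(\nu\,\mathrm{sech}\,\beta)|\le C\nu^{-1/2}e^{-\nu(\beta-\tanh\beta)}$ closes the gap cleanly, since $\mathrm{sech}\,\beta\le 0.837$ throughout the high range forces $\beta-\tanh\beta$ to be bounded below by a positive constant, and the polynomial prefactors in $\gamma_j^k$ are absorbed into the exponential. Note also that the resulting constant is in fact uniform in $k$ (the factor $(2/(k\pi))^{\alpha+1/2}$ is bounded for $k\ge 1$), so your passage from $C_{\alpha,k}$ to $C_{\alpha,c}$ is legitimate. In short, your argument is a tightened version of the paper's, at the price of importing one sharp Bessel inequality not otherwise used in the paper.
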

\begin{proof}
For all $A\geq 1,9 $	we have
	\begin{eqnarray}
	% \nonumber to remove numbering (before each equation)
	|\langle e^{ik\pi x}, \psi_{n,c}^{\alpha}\rangle _{L^2(I,\omega_{\alpha\alpha})}| &\leq& \displaystyle{ \sum_{m\geq 0} } |\beta_m^n||\langle e^{ik\pi x}, \tilde{P}_{m}^{(\alpha,\alpha)}\rangle _{L^2(I,\omega_{\alpha\alpha})}|\nonumber\\
	&\leq& \displaystyle {\sum_{m= 0} ^{[n/A]}}|\beta_m^n||\langle e^{ik\pi x}, \tilde{P}_{m}^{(\alpha,\alpha)}\rangle _{L^2(I,\omega_{\alpha\alpha})}| \nonumber \\
	&+ &    \displaystyle{ \sum_{m\geq {[n/A]}+1}} |\beta_m^n||\langle e^{ik\pi x}, \tilde{P}_{m}^{(\alpha,\alpha)}\rangle _{L^2(I,\omega_{\alpha\alpha})}| =I_1 +I_2
	\end{eqnarray}

	For $I_2$ since,$|\beta_m^n|\leq1$ and by using $(\ref{Eq3.2}), \;  (\ref{1})$ and $\eqref{24}$ , one gets
	\begin{eqnarray}
	% \nonumber to remove numbering (before each equation)
	I_2&\leq& \ds{ \sum_{m\geq {[n/A]}+1}} |\langle e^{ik\pi x}, \tilde{P}_{m}^{(\alpha,\alpha)}\rangle _{L^2(I,\omega_{\alpha\alpha})}|
	\leq \ds{ \sum_{m\geq {[n/A]}+1}}\sqrt{\frac{\pi}{2m+1 }}\left(\frac{{k\pi e}}{2m +1} \right)^m\nonumber\\
	&\leq& \ds{ \sum_{m\geq {[n/A]}+1}}\sqrt{\frac{\pi}{2[\frac{n}{A}]+3 }}\left(\frac{{k\pi e}}{2[\frac{n}{A}]+3} \right)^m
	\leq K\sqrt{\frac{\pi}{2c+1 }}\left(\frac{{k\pi e}}{2\frac{n}{A}+1} \right)^{\frac{n}{A}}
	\end{eqnarray}
	Where $K$ is a positive constant. \\
	It is clear that for $k\leq \frac{1.2n}{e\pi}\simeq 0.14n$ there is $b>0 $ such as
	\begin{eqnarray}
	% \nonumber to remove numbering (before each equation)
	I_2 &\leq &K\sqrt{\frac{\pi}{2c+1 }}\left(\frac{{k\pi e}}{1.2n+1} \right)^{\frac{n}{1.7}}
	\leq K e^{-bn}
	\end{eqnarray}
	For $I_1 $, we have $|\langle e^{ik\pi x}, \tilde{P}_{m}^{(\alpha,\alpha)}\rangle _{L^2(I,\omega_{\alpha\alpha})}|\leq 1$, so we use (\ref{10}). For  $n> m_{\alpha}c$ with $m_{\alpha}=4.13\left(1.28+\frac{2\alpha+1}{1.9}\right)^{0.55}$ we have
	\begin{eqnarray}
	% \nonumber to remove numbering (before each equation)
	I_1 &\leq & \ds {\sum_{m= 0} ^{[n/A]}}|\beta_m^n|\leq K_{\alpha} e^{-an}
	\end{eqnarray}
	
\end{proof}

\begin{proposition}
	Let $c>1$ and $\alpha  >0 $, then there exist constants $K>0$ and $a>0$ such that, when
	$N > m_{\alpha}c$ with $m_{\alpha}=4.13\left(1.28+\frac{2\alpha+1}{1.9}\right)^{0.55}$   and $f \in H^s_{per}$; $s > 0$, we have the inequality
	\begin{eqnarray} \label{ineq}
	% \nonumber to remove numbering (before each equation)
	\|f-S_N(f)\|_{L^2_{\alpha}(I)} &\leq& \left(1+ \left(\frac{N}{2}\right)^2\right)^{\frac{-s}{2}} \|f\|_{H^s_{per}} +K.e^{-a N}\|f\|_{L^2_{\alpha}(I)}\label{8}
	\end{eqnarray}
	Where $S_N(f)(t)=\ds{\sum_{n<N}}<f,\psi_{n,c}^{\alpha}>_{L^2_{\alpha}(I)}\psi_{n,c}^{\alpha}$
	
\end{proposition}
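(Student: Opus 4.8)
The plan is to reproduce, almost verbatim, the argument of the companion theorem proved above (the one relying on Lemma \ref{decaybeta}), but with the Jacobi expansion replaced by the Fourier (exponential) one and Lemma \ref{decaybeta} replaced by Lemma \ref{decaycoeff}. First I would fix a threshold $N_0$, to be taken as a fixed fraction of $N$, and split $f$ according to its Fourier series on $I$: writing $f=\sum_{k\in\Z}\frac{b_k(f)}{\sqrt2}\,e^{i\pi k x}$ (so that $\{e^{i\pi k x}/\sqrt2\}_{k}$ is orthonormal in $L^2(I)$ and $\|f\|_{L^2(I)}^2=\sum_k|b_k(f)|^2$), set $g=\sum_{|k|<N_0}\frac{b_k}{\sqrt2}e^{i\pi k x}$ and $h=f-g$. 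Since $S_N$ is the orthogonal projection of $L^2(I,\omega_\alpha)$ onto $\mathrm{Span}\{\ps:\ n<N\}$, the operator $I-S_N$ is a contraction, whence
\[
\|f-S_N f\|_{L^2(I,\omega_\alpha)}\ \le\ \|g-S_N g\|_{L^2(I,\omega_\alpha)}+\|h\|_{L^2(I,\omega_\alpha)} .
\]
It then suffices to bound the two terms separately.

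For the high-frequency part $h$, I would use that $\omega_\alpha(t)=(1-t^2)^\alpha\le 1$ on $I$ for $\alpha>0$, so that $\|h\|_{L^2(I,\omega_\alpha)}\le\|h\|_{L^2(I)}$, and then exploit the orthonormality of the exponentials together with the very definition of $\|\cdot\|_{H^s_{per}}$:
\[
\|h\|_{L^2(I)}^2=\sum_{|k|\ge N_0}|b_k|^2\ \le\ \bigl(1+(N_0\pi)^2\bigr)^{-s}\sum_{|k|\ge N_0}\bigl(1+(k\pi)^2\bigr)^{s}|b_k|^2\ \le\ \bigl(1+(N_0\pi)^2\bigr)^{-s}\|f\|_{H^s_{per}}^2 .
\]
Choosing $N_0$ comparable to $N$ so that $N_0\pi\gtrsim N/2$ then produces the first term $\bigl(1+(N/2)^2\bigr)^{-s/2}\|f\|_{H^s_{per}}$.

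For the low-frequency part $g$, I would use that $\{\ps\}_{n\ge0}$ is an orthonormal basis of $L^2(I,\omega_\alpha)$, so that $\|g-S_N g\|_{L^2(I,\omega_\alpha)}^2=\sum_{n\ge N}|\langle g,\ps\rangle_{L^2(I,\omega_\alpha)}|^2$, and expand $\langle g,\ps\rangle=\sum_{|k|<N_0}\frac{b_k}{\sqrt2}\langle e^{i\pi k x},\ps\rangle_{L^2(I,\omega_\alpha)}$. As soon as $N_0\le 0.14\,N$, every pair $(k,n)$ with $|k|<N_0$ and $n\ge N$ satisfies the hypothesis $|k|\le0.14\,n$ of Lemma \ref{decaycoeff}, giving $|\langle e^{i\pi k x},\ps\rangle|\le C_{\alpha,c}e^{-\delta n}$. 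A Cauchy--Schwarz step then yields $|\langle g,\ps\rangle|\le C_{\alpha,c}\sqrt{N_0}\,e^{-\delta n}\|f\|_{L^2(I)}$, and summing the geometric series over $n\ge N$ while absorbing the factor $N_0\lesssim N$ into the exponential gives $\|g-S_N g\|_{L^2(I,\omega_\alpha)}\le K\,e^{-aN}\|f\|_{L^2(I)}$ for any $a<\delta$. Finally I would extend the estimate from trigonometric polynomials to all of $H^s_{per}$ by density, using the continuity of both sides.

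The step I expect to be delicate is the joint calibration of the single threshold $N_0$. The tail estimate wants $N_0\pi\ge N/2$, i.e. $N_0\gtrsim N/(2\pi)\approx0.16\,N$, whereas Lemma \ref{decaycoeff} is only available for $N_0\le0.14\,N$; these two windows are close but disjoint for large $N$. Consequently the exact constant $1$ and base $N/2$ in the first term should, in all honesty, be read up to a harmless multiplicative constant (exactly as a generic $K$ multiplies the first term in the companion $H^s_\alpha$ theorem) or with a base slightly smaller than $N/2$; the exponentially small second term is untouched. I would also flag that the norm appearing naturally in the second term is $\|f\|_{L^2(I)}$, which is $\le\|f\|_{H^s_{per}}$; identifying it with the printed $\|f\|_{L^2_\alpha(I)}$ is the one point where I would re-examine the intended statement, since $\omega_\alpha\le1$ only gives $\|f\|_{L^2_\alpha(I)}\le\|f\|_{L^2(I)}$, i.e. the wrong direction.
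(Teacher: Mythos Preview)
Your approach is essentially identical to the paper's: split $f$ via its Fourier series into a low-frequency part $g$ and a high-frequency tail $h$, control $\|h\|_{L^2(I,\omega_\alpha)}\le\|h\|_{L^2(I)}$ by the $H^s_{per}$ norm, and handle $\|g-S_Ng\|$ by expanding in the GPSWF basis and invoking Lemma~\ref{decaycoeff}. The two caveats you raise---the incompatibility between the windows $N_0\le 0.14N$ and $N_0\pi\ge N/2$, and the fact that the natural norm in the second term is $\|f\|_{L^2(I)}$ rather than $\|f\|_{L^2_\alpha(I)}$---are genuine, and the paper's own proof glosses over exactly the same points (it never specifies $N_0$ and ends with $\|g\|_{L^2(I,\omega_\alpha)}$, which is only bounded by $\|f\|_{L^2(I)}$), so your reading of the statement ``up to harmless constants'' is the correct one.
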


\begin{proof}
	For $\alpha \geq 0 $,we have $ H^s_{\alpha}(I) \subset H^s_{0}(I)$. Let $ f \in H^s_{per}$, then
	$$ f = \sum_{k \in \Z} b_k(f) \phi_k = \sum_{|k|<N_0} b_k(f) \phi_k + \sum_{|k|>N_0} b_k(f) \phi_k = g+h
	\quad \mbox{where} \quad \phi_k(x) = e^{ikx} $$.
	\begin{equation}
	\norm{h}_{L^2(I,\omega_{\alpha})}^2 \leq \norm{h}_{L^2(I)}^2 \leq C (1+(N\pi)^2)^{-s}\|h\|_{H^s_{per}}
	\end{equation}
	By using the fact that $ g \in L^2(I, \omega_{\alpha})$ and lemma \ref{decaybeta}, one gets
	\begin{equation}\label{devg}
	g = \displaystyle \sum_{n \in \N} <g,\ps>_{L^2(I,\omega_{\alpha})} \ps
	\end{equation}
	
	\begin{eqnarray}
	\norm{g-S_N.g}_{L^2(I,\omega_{\alpha})}^2 &=& \sum_{n \geq N+1} |<g,\ps>_{L^2(I,\omega_{\alpha})}|^2 \nonumber \\
	&\leq& \sum_{n\geq N+1} \sum_{|k|<N_0} |b_k(g)|^2 <\phi_k,\ps>_{L^2(I,\omega_{\alpha})} \nonumber \\
	& \leq & C e^{-\delta N} \norm{g}_{L^2(I,\omega_{\alpha})}
	\end{eqnarray}
	In the last inequality we use lemma \ref{decaycoeff}
\end{proof}

\section{Numerical results}

In this section,   we give
some numerical examples that illustrate the different results of this work.\\

\noindent

{\bf Example 1:}

In this example we illustrate the decay rate of the eigenvalues $(\lambda_{n}^{\alpha}(c))$ given by theorem \ref{decaylambda} for different values of $ \alpha$. In figure \ref{lambda1} (a), we have plotted the graph of $(\lambda_{n}^{\alpha}(c))$ and in figure \ref{lambda1} (b) we have plotted the graphs of $log(\lambda_{n}^{\alpha}(c))$ versus the graphs of $ -(2n+1) \log\Big(\frac{4n+4\alpha+2}{ec}\Big)$

\begin{figure}[h]\label{lambda1}
	\centering
	{\includegraphics[width=14cm,height=3.5cm]{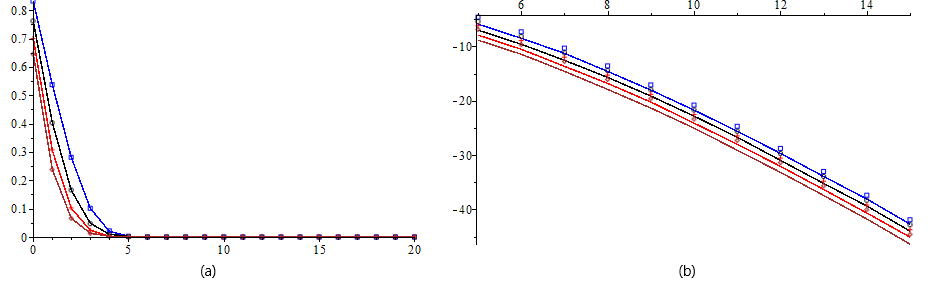}}
	\caption{(a) Graph of $\lambda_n^{(\alpha)}(c) $for $\alpha = 1$ (blue), $\alpha = 1.5$ (black), $\alpha = 2$ (red) and $\alpha = 2.5$ (brown) (b)Graph of $\log(\lambda_n^{(\alpha)}(c)) $} 	
\end{figure}

\noindent

{\bf Example 2:} In this example, for a positive real number $s$, we consider the Brownian motion function  
$$ B_s(x) = \sum_{k=1}^{\infty} \frac{X_k}{k^s} \cos(k \pi x) \quad x\in I=[-1,1] $$
Here $X_k$ is a sequence of independent standard Gaussian random variable. It is well known that $B_s\in H^{s'-1/2}(I),$ for any
$s'< s-\frac{1}{2}.$ Here, $H^s(I)$ is the  Sobolev space over $I,$ with smoothness exponent $s.$ For the special values of 
$s=1.5$ and  $ c= 5\pi $, and the two values of $N=46$ and $N=90,$ we have computed $S_N(B_s),$ the truncated series expansion to the order $N,$ of $B_s$ in the GPSWFs basis. We have found that 
$$\| B_s - S_{46}(B_s)\|_{\infty} \approx 01 E-01,\qquad \| B_s - S_{90}(B_s)\|_{\infty} \approx 05 E-02.$$
Moreover, the graphs of $B_{s}$ and  of its approximation  $S_{90} B_{s}$ are given by the figure \ref{brownian}.
\begin{figure}[h]\label{brownian}
	\centering
	{\includegraphics[width=18.05cm,height=6.5cm]{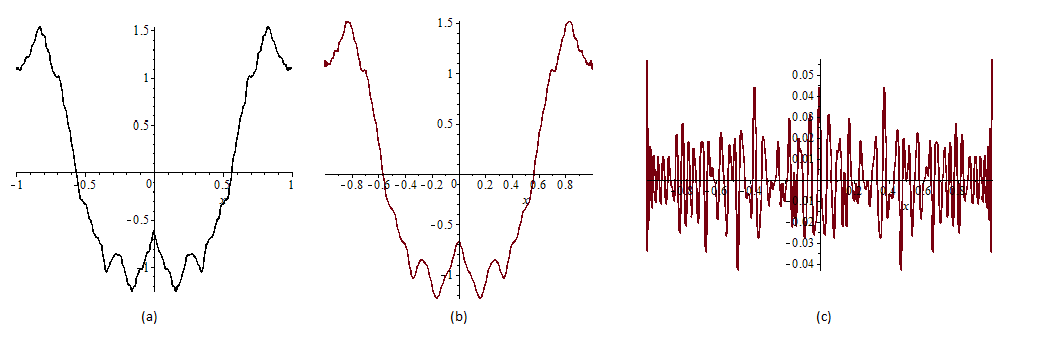}}
	\caption{(a) Graph of $B_s(x) $ (b) graph of $S^{(\alpha)}_{90} B_s(x) $  (c) graph of the error  $E_N= B_s(x)-S^{(\alpha)}_{90} B_s,$ with $s=1.5$,  $N=90.$, $c=5\pi$ and $\alpha=1.5$ }	
\end{figure}

\noindent

{\bf Example 2:}To illustrate approximation by GPSWFs we have considered the Weierstrass-Mondelbrot function given by :

$$ M_{s,\lambda}(x) = \sum_{k=0}^{\infty} \frac{\sin(\lambda^k x)}{\lambda^{k(2-s)}} $$
For $\lambda > 1 $ and $s<1$, $M_{s,\lambda} \in H^s_{\alpha}(I)$
In this example we have considered the values of $\alpha = 0.5$, $c=5\pi$ and $N=95$ then we have computed the projection, over $Span(\psi_{0,c}^{(\alpha)}, \cdots , \psi_{N,c}^{(\alpha)})$, $S_N(M_{s,\lambda}(x)) $. For this purpose, we have computed the different expansion coefficients $ \displaystyle{ C_n(M_{s,\lambda}(x)) = \int_{-1}^{1}M_{s,\lambda}(x)\ps(y) \omega_{\alpha}(y)dy  }$ which are exactly given thanks to \eqref{Eq3.2}. Since $M_{s,\lambda}$ is an odd function , $C_{2n}(M_{s,\lambda}) = $ . Moreover
\begin{eqnarray}
C_{2n+1}(M_{s,\lambda}) &=& \sum_{\ell=0}^{\infty}\beta_{2\ell+1}^{2m+1} \sum_{k=0}^{\infty} \frac{1}{\lambda^{k(2-s)}} \int_{-1}^{1} \sin(\lambda^k x) \hat{P}^{(\alpha)}_{2\ell+1}(x)\omega_{\alpha}(x) dx \nonumber \\
&=& \sqrt{\pi} 2^{\alpha+\frac{1}{2}} \sum_{\ell=1}^{\infty} (-1)^{\ell} \beta_{2\ell+1}^{2m+1} \frac{\Gamma(2\ell+\alpha+3)}{\sqrt(h_{2\ell+1})\Gamma(2\ell+1)} \sum_{k=1}^{\infty} \lambda^{k(s-\alpha-5/2)} J_{2\ell + \alpha+\frac{3}{2}}(\lambda^k)
\end{eqnarray}
In figure \ref{WM} we have plotted the function $M_{1,2}$ and the graph of the error $ M_{1,2} - S_N(M_{1,2}(x)) $
\begin{figure}[h]\label{WM}
	\centering
	{\includegraphics[width=18.05cm,height=6.5cm]{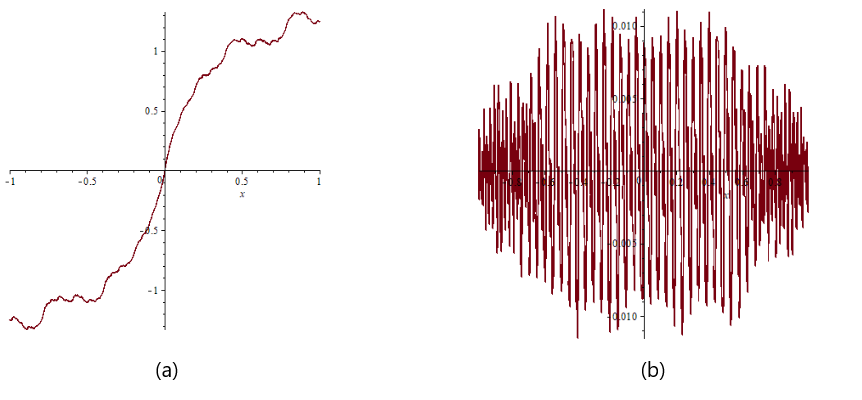}}
	\caption{(a) Graph of $M_{1,2}$ (b)  graph of the error $ M_{1,2} - S_N(M_{1.2})$} 	
\end{figure}

Moreover in table \ref{table}, we give the approximation error $\norm{ M_{s,2}-S_N^{(\alpha)}(M_{s,2})_{L^2(I,\omega_{\alpha})} } $ of the Weierstrass-Mandelbrot function by GPSWFs for different values of s and $\alpha$.

\begin{table}
	
	\begin{center}

		\begin{tabular}[h!]{c c c c c}
			\hline
			$\alpha$ & $s=0.25 $ & $s=0.5$ & $s=0.75$ & $s=1 $  \\
			\hline
			$0.1$ & $1.69146E-04 $ & $5.42800E-04$ & $1.74173E-03$ & $ 5.61554E-0.3 $  \\
			$0.5$ & $1.90589E-04 $ & $6.07253E-04$ & $1.93120E-03$ & $ 6.15556E-0.3 $  \\
			$1$ & $2.12572E-04 $ & $6.72661E-04$ & $2.12113E-03$ & $ 6.68912E-0.3 $ \\
			$1.5$ & $2.30518E-04 $ & $7.25472E-04$ & $2.27216E-03$ & $7.10411E-03$  \\
			$2.0$ & $2.45810E-04$ & $7.70063E-04$ & $2.39797E-03$ & $7.44278E-03$  \\
			\hline
			
		\end{tabular}
	\end{center}
	\caption{Values of the approximation of Weierstrass-Mandelbrot function $ \norm{ M_{s,2}-S_N^{(\alpha)}(M_{s,2})_{L^2(I,\omega_{\alpha})} } $ for different values of $s$ and $\alpha$}
	\label{table}
	
\end{table}

\end{document}